\newtheorem{theorem}{\sc Theorem}[section]
\newtheorem{lemma}[theorem]{\sc Lemma}
\newtheorem{proposition}[theorem]{\sc Proposition}
\newtheorem{remark}[theorem]{\sc Remark}
\numberwithin{equation}{section}
\newcommand{\be}{\begin{equation}}
\newcommand{\ee}{\end{equation}}
\def\E{\bE}
\def\P{\bP} 
\def\cG{\mathcal{G}}
\def\bE{\mathbb{E}}
\def\bZ{\mathbf{Z}}
\newcommand{\Z}{\mathbf{Z}}
\newcommand{\R}{\mathbf{R}}
\renewcommand{\d}{{\rm d}}
\renewcommand{\geq}{\geqslant}
\renewcommand{\leq}{\leqslant}
\renewcommand{\le}{\leqslant}
\renewcommand{\P}{\mathrm{P}}
\def\m1{\mathbf{1}}
   \DeclareMathOperator{\Cov}{Cov}  
\author{Mohammud Foondun\\ University of Strathclyde \and Eulalia Nualart \\ Universitat Pompeu Fabra\\}
\title{Spatial asymptotics and strong comparison principle for some fractional stochastic heat equations.
\date{}
}
\begin{document}

\maketitle

\begin{abstract}
Consider the following stochastic heat equation,
\begin{align*}
\frac{\partial u_t(x)}{\partial t}=-\nu(-\Delta)^{\alpha/2} u_t(x)+\sigma(u_t(x))\dot{F}(t,\,x), \quad t>0, \; x \in \R^d.
\end{align*}
Here $-\nu(-\Delta)^{\alpha/2}$ is the fractional Laplacian with $\nu>0$ and $\alpha \in (0,2]$, $\sigma: \R\rightarrow \R$ is a globally Lipschitz function, and $\dot{F}(t,\,x)$ is a Gaussian noise which is white in time and colored in space. Under some suitable additional conditions, we explore the effect of the initial data on the spatial asymptotic properties of the solution.   We  also  prove a strong comparison theorem. This constitutes an important extension over a series of works most notably 
\cite{CJK}, \cite{CJKS2}, \cite{CK} and \cite{CDKh}. \\
\noindent{\it Keywords:}
Stochastic PDEs, comparison theorems, colored noise.\\

\noindent{\it \noindent AMS 2010 subject classification:}
Primary 60H15; Secondary: 35K57.
\end{abstract}

\section{Introduction and main results}
Consider the following stochastic heat equation, 
\begin{align}\label{main-eq}
\frac{\partial u_t(x)}{\partial t}=-\nu(-\Delta)^{\alpha/2} u_t(x)+\sigma(u_t(x))\dot{F}(t,\,x),  \quad t>0, \; x \in \R^d,
\end{align}
where $-\nu(-\Delta)^{\alpha/2}$ is the fractional Laplacian, that is, the infinitessimal generator of a symmetric $\alpha$-stable process with density $p_t(x)$, where $\alpha \in (0,2]$, and $\nu>0$ is a viscosity constant. The noise $\dot{F}(t,\,x)$ is white in time and colored in space satisfying 
\begin{equation*}
\Cov(\dot{F}(t,\,x), \dot{F}(s,\,y))=\delta_0(t-s)f(x-y), 
\end{equation*}
where $f$ is the spatial correlation function which we take to be the Riesz kernel
\begin{align*}
f(x):=\frac{1}{|x|^\beta},\qquad 0<\beta<d. 
\end{align*}
The function $\sigma:\R\rightarrow \R$ is a globally Lipschitz continuous function with $\sigma(0)=0$, that is, there exists a constant $L_{\sigma}>0$ such that $$\vert \sigma(x)\vert \leq L_{\sigma} \vert x\vert,\quad \text{  for all } x \in \R^d.$$
The initial condition $u_0$ is always going to be a nonnegative function in $\R^d$ such that $$\bar{u}_0:=\sup_{x\in \R^d}u_0(x)<\infty.$$
 Following Walsh \cite{Walsh}, if one further assume that 
\begin{align*}
\beta<\min(\alpha,\,d),
\end{align*}
then \eqref{main-eq} has a unique mild solution $\{u_t(x), t \geq 0, x \in \R^d\}$ which
is adapted and jointly measurable and satisfies 
\begin{equation}\label{mild}
u_t(x)=(p_t\ast u_0)(x)+\int_0^t\int_{\R^d}p_{t-s}(x-y)\sigma(u_s(y))\,F(\d s\,\d y),
\end{equation}
where 
$$
(p_t\ast u_0)(x)=\int_{\R^d} p_t(x-y) u_0(y) dy,
$$
and 
\begin{equation*}
\sup_{x\in \R^d, t\in [0,\,T]}\E|u_t(x)|^k<\infty\quad\text{for all}\quad k\geq 2\quad\text{and}\quad T<\infty.
\end{equation*}
 For more information about existence-uniqueness considerations, please consult \cite{Walsh}, \cite{minicourse} and \cite{Davar-CBMS}.  
 
 The results of this paper are motivated by two comparison theorems proved recently  in  \cite{FLJ} for the  solution to \eqref{mild}. 
 The first one is the following moment comparison theorem.
\begin{theorem} \label{mct}\textnormal{\cite{FLJ}}
  Let $u$ and $v$ two solutions to \eqref{mild}, one with $\sigma$, the other with another globally Lipschitz continuous function $\bar{\sigma}$ such that $\bar{\sigma}(0)=\sigma(0)=0$ and 
$\sigma(x) \geq \bar{\sigma}(x)\geq 0$ for all $x\in \R_+$. Then for any $k \in \mathbb{N}$, $x \in \R^d$, and $t \geq 0$,
$$
\E[u_t(x)^k] \geq \E[v_t(x)^k].
$$
\end{theorem}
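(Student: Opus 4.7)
The natural approach is Picard iteration. Set $u^{(0)}_t(x)=v^{(0)}_t(x)=(p_t\ast u_0)(x)$ and recursively
\begin{equation*}
u^{(n+1)}_t(x)=(p_t\ast u_0)(x)+\int_0^t\!\!\int_{\R^d} p_{t-s}(x-y)\,\sigma\bigl(u^{(n)}_s(y)\bigr)\,F(\d s,\d y),
\end{equation*}
and similarly $v^{(n+1)}$ with $\bar\sigma$ in place of $\sigma$. Standard Dalang--Walsh theory gives $u^{(n)}\to u$ and $v^{(n)}\to v$ in $L^k$ for every $k\ge 2$, and the conditions $u_0\ge 0$, $\sigma(0)=\bar\sigma(0)=0$ ensure the iterates stay nonnegative by a truncation/positivity argument. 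Hence it suffices to prove the moment inequality at each Picard level $n$ and pass to the limit $n\to\infty$.

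For the induction on $n$, I would strengthen the claim to a comparison of arbitrary multi-point correlations: for every $N\ge 1$, every $(t_i,x_i,m_i)_{i=1}^N$ and every $n\ge 0$,
\begin{equation*}
\E\!\left[\prod_{i=1}^N\bigl(u^{(n)}_{t_i}(x_i)\bigr)^{m_i}\right]\;\ge\;\E\!\left[\prod_{i=1}^N\bigl(v^{(n)}_{t_i}(x_i)\bigr)^{m_i}\right]\;\ge\;0.
\end{equation*}
The inductive step consists in substituting the mild formula into each of the $N$ factors at level $n+1$, expanding, and applying the Walsh--Dalang moment identities for stochastic integrals driven by the colored noise $F$ with covariance $\delta_0(s-s')f(y-y')$. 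This expresses the $(n+1)$-level multi-point correlation as a nonnegative combination---weighted by products of heat kernels $p$ and correlation kernels $f$---of multi-point correlations at level $n$ of $\sigma\circ u^{(n)}$ (respectively $\bar\sigma\circ v^{(n)}$).

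The main obstacle, and the technical heart of the argument, is bridging comparisons of moments of $u^{(n)},v^{(n)}$ with comparisons of moments of $\sigma(u^{(n)}),\bar\sigma(v^{(n)})$. The decomposition $\sigma(u^{(n)})-\bar\sigma(v^{(n)})=(\sigma-\bar\sigma)(u^{(n)})+\bigl(\bar\sigma(u^{(n)})-\bar\sigma(v^{(n)})\bigr)$ splits the problem: the first summand is pathwise nonnegative by $\sigma\ge\bar\sigma\ge 0$ on $\R_+$ together with positivity of the iterates, while the second requires propagating the iterate-level moment comparison through the nonlinear map $\bar\sigma$. One natural way to carry this out is to approximate $\bar\sigma$ uniformly on the bounded ranges where the iterates concentrate by polynomials with nonnegative coefficients (using a positivity-preserving scheme, with uniform $L^k$-moment bounds on the iterates providing the necessary tail control), so that multi-point moments of $\bar\sigma\circ u^{(n)}$ become nonnegative linear combinations of multi-point monomial moments to which the induction hypothesis applies term by term. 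Passing the polynomial approximation to the limit and then sending the Picard index $n\to\infty$ in $L^k$ closes the argument.
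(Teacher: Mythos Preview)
The paper does not prove this theorem; it is quoted from \cite{FLJ} as a known input, so there is no ``paper's own proof'' to compare against. I can only assess your proposal on its merits.

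There is a genuine gap at the step you flag as ``the technical heart''. Your plan is to pass from a multi-point moment comparison for $u^{(n)},v^{(n)}$ to one for $\bar\sigma(u^{(n)}),\bar\sigma(v^{(n)})$ by approximating $\bar\sigma$ uniformly on compacts by polynomials with \emph{nonnegative} coefficients. Such an approximation is impossible for generic Lipschitz $\bar\sigma\ge 0$ with $\bar\sigma(0)=0$: any polynomial with nonnegative coefficients is convex on $[0,\infty)$, so it cannot uniformly approximate a function with a strictly concave piece. Concretely, take $\bar\sigma(x)=2x-x^{2}$ on $[0,1]$; if $p$ has nonnegative coefficients and $\|p-\bar\sigma\|_{L^\infty[0,1]}<\varepsilon$, convexity forces $p(\tfrac12)\le \tfrac12(p(0)+p(1))\le \tfrac12+\varepsilon$, while $\bar\sigma(\tfrac12)=\tfrac34$, so $\varepsilon>\tfrac18$. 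Hence the ``positivity-preserving scheme'' you invoke does not exist in general, and the induction step does not close. A second, related issue is your claim that the Picard iterates ``stay nonnegative'': they do not. Already $u^{(1)}_t(x)=(p_t\ast u_0)(x)+\int_0^t\!\int p_{t-s}(x-y)\,\sigma((p_s\ast u_0)(y))\,F(\d s,\d y)$ has a Gaussian stochastic-integral part that takes negative values with positive probability, so $u^{(1)}$ is not $\ge 0$ pathwise. Since the hypothesis $\sigma\ge\bar\sigma\ge 0$ is only assumed on $\R_+$, you cannot control $\sigma(u^{(n)})$ versus $\bar\sigma(v^{(n)})$ on the event $\{u^{(n)}<0\}$, and the decomposition $(\sigma-\bar\sigma)(u^{(n)})+\bigl(\bar\sigma(u^{(n)})-\bar\sigma(v^{(n)})\bigr)$ loses its sign.

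The arguments that actually establish results of this type (as in \cite{FLJ}) do not go through Picard moments at all; they proceed by spatially discretising the SPDE to a finite interacting SDE system, proving a pathwise comparison for the SDE system (where It\^o calculus and a stopping/localisation argument give nonnegativity and monotonicity in $\sigma$ directly), and then passing to the continuum limit. If you want to repair your approach, that is the direction to look: replace the moment-level induction by a pathwise comparison at the level of an approximating interacting diffusion.
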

An important consequence of Theorem \ref{mct} are the following sharp estimates on the moments of the solution to (\ref{mild}), when the initial condition is bounded below and under the additional assumption that there exists a constant  $l_\sigma>0$ such that 
\begin{align} \label{sig}
\quad  \sigma(x)\geq l_\sigma|x|,\quad \text{for all}\quad x\in \R^d.
\end{align} 
This was unknown till the work of \cite{FLJ}.
\begin{theorem} \label{momentbounds}
Let $u$ be the  solution to \textnormal{(\ref{mild})}. Assume \textnormal{(\ref{sig})} and 
\begin{equation} \label{infsup}
0<\underline{u}_0:=\inf_{x\in \R^d}u_0(x).
\end{equation}
Then there exists a positive constant $A$ such that for all $x\in \R^d$ and $t>0$,  
\begin{equation*} 
 \frac{\underline{u}_0^k}{A^{k}} \exp\left(\frac{1}{A}k^{\frac{2\alpha-\beta}{\alpha-\beta}}t \nu^{-\frac{\beta}{\alpha-\beta}}\right)\le \E|u_t(x)|^k\le A^k\bar{u}_0^k\exp\left(Ak^{\frac{2\alpha-\beta}{\alpha-\beta}}t\nu^{-\frac{\beta}{\alpha-\beta}}\right).
\end{equation*}
The upper bound holds for $k \geq 2$ while the lower bound holds for $k \geq k_0$, where $k_0$ is some large positive number. 
\end{theorem}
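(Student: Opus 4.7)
\emph{Upper bound.} The plan is to take $L^k(\Omega)$-norms directly in the mild formulation~\eqref{mild}, using $(p_t\ast u_0)(x)\le\bar u_0$, the Burkholder-Davis-Gundy inequality in the sharp form $\|M_t\|_k\le C\sqrt{k}\,\|[M]_t^{1/2}\|_k$, and Minkowski's inequality applied to the double space integral arising from the Walsh covariance structure. Writing $N(t):=\sup_{x\in\R^d}\|u_t(x)\|_k^2$ and using $|\sigma(u)|\le L_\sigma|u|$, this produces
\[
N(t)\le 2\bar u_0^{\,2}+c_1 k L_\sigma^2\int_0^t\iint p_{t-s}(x-y)p_{t-s}(x-z)f(y-z)\,N(s)\,dy\,dz\,ds.
\]
By the Fourier/scaling identity $p_{t-s}\ast p_{t-s}=p_{2(t-s)}$ together with $p_t(x)=(\nu t)^{-d/\alpha}p_1((\nu t)^{-1/\alpha}x)$, the inner double integral equals $c_2(\nu(t-s))^{-\beta/\alpha}$, with exponent in $(0,1)$ by the Walsh-Dalang condition $\beta<\alpha$. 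Iterating the resulting singular Volterra inequality (equivalently, invoking the Mittag-Leffler function of order $1-\beta/\alpha$) yields $N(t)\le c_3\bar u_0^{\,2}\exp(c_4 k^{\alpha/(\alpha-\beta)}\nu^{-\beta/(\alpha-\beta)}t)$; raising to the $k/2$ power and absorbing constants gives the claimed upper bound with the advertised exponent $k^{(2\alpha-\beta)/(\alpha-\beta)}$.

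\emph{Lower bound.} The strategy is to invoke the moment comparison Theorem~\ref{mct} with the auxiliary coefficient $\bar\sigma(x)=l_\sigma|x|$: assumption~\eqref{sig} together with $\sigma(0)=0$ gives $\sigma(x)\ge\bar\sigma(x)\ge 0$, so $\E[u_t(x)^k]\ge\E[v_t(x)^k]$, where $v$ is the solution of~\eqref{mild} with $\bar\sigma$ in place of $\sigma$ and the same initial datum $u_0$. The strong comparison principle of the present paper (together with the positivity~\eqref{infsup} of $u_0$) forces $v\ge 0$ pointwise, so in fact $v$ satisfies the \emph{linear} parabolic Anderson-type equation
\[
v_t(x)=(p_t\ast u_0)(x)+l_\sigma\int_0^t\!\!\int_{\R^d}p_{t-s}(x-y)\,v_s(y)\,F(ds\,dy).
\]
For this linear equation a Wiener chaos / Picard calculation produces the Hu-Nualart type moment formula
\[
\E[v_t(x)^k]=\E\!\left[\prod_{i=1}^{k}u_0(X^i_t)\,\exp\!\Bigl(l_\sigma^2\!\!\sum_{1\le i<j\le k}\!\int_0^t f(X^i_s-X^j_s)\,ds\Bigr)\right],
\]
where $X^1,\dots,X^k$ are independent $\alpha$-stable processes at viscosity $\nu$ issued from $x$. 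Using $u_0\ge\underline u_0$ reduces matters to a sharp lower bound on the exponential moment of the Riesz pair-collision functional.

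\emph{Main obstacle.} This last step is where the real work lies: a straight Jensen lower bound $\E e^A\ge e^{\E A}$ would only produce an exponent of order $k^2 t^{1-\beta/\alpha}$, which has the wrong $t$-dependence. To recover the advertised $k^{(2\alpha-\beta)/(\alpha-\beta)}\nu^{-\beta/(\alpha-\beta)}t$ rate, I would localize by restricting the expectation to the event that each of the $k$ stable processes remains within a ball of radius $r$ around $x$ throughout $[0,t]$. On this event $f(X^i_s-X^j_s)\ge(2r)^{-\beta}$, which yields a deterministic gain $\exp(\binom{k}{2}l_\sigma^2(2r)^{-\beta}t)$, while the cost of localizing $k$ stable processes is of order $\exp(-c_5 k\nu t/r^\alpha)$. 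Optimizing in $r$ (the minimizer scales like $r_*\sim(\nu/(kl_\sigma^2))^{1/(\alpha-\beta)}$) balances cost against gain and, after some algebra, reproduces precisely the advertised exponent; the restriction $k\ge k_0$ is imposed so that the quadratic-in-$k$ gain dominates the linear-in-$k$ localization cost. This localized large-deviation step, following the scheme of~\cite{FLJ}, is the technically delicate part of the proof.
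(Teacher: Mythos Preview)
Your proposal is correct and matches the paper's route. The paper does not give a self-contained proof of this theorem: it simply records that Theorem~\ref{momentbounds} is a consequence of the moment comparison Theorem~\ref{mct} together with the sharp PAM moment bounds of \cite[Lemma~4.1]{KK}, the $\nu$-dependence being read off from the scaling property of the stable heat kernel. Your lower bound follows exactly this route, reducing to the linear equation via Theorem~\ref{mct} and then sketching the Feynman--Kac/localization argument that underlies the PAM estimate (this is indeed the content of \cite{KK} and \cite{FLJ}, and your optimization in $r$ correctly reproduces the exponent $k^{(2\alpha-\beta)/(\alpha-\beta)}\nu^{-\beta/(\alpha-\beta)}t$). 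For the upper bound you instead argue directly via BDG, Minkowski, and the fractional Gronwall inequality (Proposition~\ref{renew}); the paper's implicit route would be to compare upward to a PAM with coefficient $L_\sigma$ and again invoke \cite{KK}. Both approaches are standard and give the same exponent; your direct argument has the minor advantage of not needing Theorem~\ref{mct} for that half.

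One correction: to conclude $v\ge 0$ you should invoke the \emph{weak} comparison principle (Theorem~\ref{weakc}), comparing $v$ with the zero solution, not the strong one. The strong comparison (Theorem~\ref{strongcomparison}) is proved later in this paper using Proposition~\ref{cty}, which in turn relies on the upper moment bound of Theorem~\ref{momentbounds}; citing it here would be circular, and in any case it is only stated for $\alpha\ge 1$. The weak principle, imported from \cite{FLJ}, is independent of the present theorem and is all that is needed.
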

For the  case $\sigma(x)=x$ (known as the Parabolic Anderson model), the above is given by \cite[Lemma 4.1]{KK}. The scaling property of the heat kernel gives the dependence of the bounds on the parameter $\nu$. An immediate consequence of Theorem \ref{momentbounds} is that the solution to (\ref{main-eq}) is fully intermittent meaning that
for all $k \geq 2$,  the function
$$
k  \rightarrow \frac{1}{k} \gamma(k):=\frac{1}{k}\limsup_{t \rightarrow \infty} \log \E|u_t(x)|^k
\quad \text{ is strictly increasing}.
$$
Intuitively, this means that the solution develops many
high peaks distributed over small $x$-intervals when $t$ is large (see \cite{FK} and the references therein). The fact that the solution to (\ref{main-eq}) is weakly intermittent was already known (see e.g.\cite{FLO} and the references therein), meaning that
$$
\gamma(2)>0 \quad \text{ and } \quad \gamma(k)<\infty, \text{ for all } k \geq 2.
$$

The previous results concern the moments of the solution to (\ref{main-eq}), but much less is known about the almost sure asymptotic behaviour of the solution, which is crucial to understand better its chaotic behaviour.
The main purpose of this paper is to explore how the almost surely spatial asymptotic behaviour of the solution to (\ref{main-eq}) depends on the initial function $u_0$. We start with the case that $u_0$ is bounded below as in Theorem \ref{momentbounds}.  A first observation is that, since $u_0$ is also bounded above, then we can easily see that 
\begin{equation*}
\E u_t(x)\leq c,
\end{equation*}
where $c$ is the upper bound of $u_0$. Since $u_0$ is bounded below, it is not trivial to say more about this. However, this is sufficient to show that almost surely, $\liminf_{|x|\rightarrow \infty}u_t(x)$ is bounded as well. This is in sharp contrast with the behaviour of supremum of the solution as described by the next theorem.

\begin{theorem}\label{asym}
Let $u$ be the unique solution to \textnormal{(\ref{mild})}, and assume that \textnormal{(\ref{sig})} and \textnormal{(\ref{infsup})} hold.
Then there exist positive constants $c_1,c_2$ such that for every $t > 0$, 
\begin{align*}
c_1\frac{t^{(\alpha-\beta)/(2\alpha-\beta)}}{\nu^{\beta/(2\alpha-\beta)}} \leq \liminf_{R \rightarrow \infty}&\frac{\log \sup_{x\in B(0,\,R)}u_t(x)}{\left(\log R\right)^{\alpha/(2\alpha-\beta)}}\\
&\leq 
\lim\sup_{R \rightarrow \infty}\frac{\log \sup_{x\in B(0,\,R)}u_t(x)}{\left(\log R\right)^{\alpha/(2\alpha-\beta)}}\leq 
c_2 \frac{t^{(\alpha-\beta)/(2\alpha-\beta)}}{\nu^{\beta/(2\alpha-\beta)}}\quad\text{a.s.}
\end{align*}
\end{theorem}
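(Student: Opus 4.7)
The proof splits into upper and lower bounds, both governed by the moment estimates of Theorem \ref{momentbounds}; throughout write $\tau := (2\alpha-\beta)/(\alpha-\beta)$. \emph{Upper bound.} My plan is to combine Chebyshev with Theorem \ref{momentbounds} to obtain a pointwise tail estimate and then transfer to the supremum by a covering argument. From the upper moment bound, $\P(u_t(x)>\lambda) \leq A^k \bar{u}_0^k \lambda^{-k}\exp(Ak^{\tau}t\nu^{-\beta/(\alpha-\beta)})$; minimizing in $k$, i.e.\ setting $\log\lambda \asymp k^{\tau-1}t\nu^{-\beta/(\alpha-\beta)}$, gives $\P(u_t(x)>\lambda) \leq \exp(-c(\log\lambda)^{\tau/(\tau-1)}t^{-1/(\tau-1)}\nu^{\beta/((\tau-1)(\alpha-\beta))})$. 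With $\lambda_R := \exp(K_2(\log R)^{\alpha/(2\alpha-\beta)}t^{(\alpha-\beta)/(2\alpha-\beta)}\nu^{-\beta/(2\alpha-\beta)})$ and $K_2$ large, this is at most $R^{-M}$ for arbitrary fixed $M$. To pass from pointwise $x$ to $\sup_{x\in B(0,R)}$, I would use the standard spatial H\"older estimate $\E|u_t(x)-u_t(y)|^k \leq C_k|x-y|^{\gamma k}$ for some $\gamma>0$, cover $B(0,R)$ by $\asymp R^{D}$ balls of radius $R^{-q}$ with $D,q$ chosen large in terms of $\gamma$, apply Kolmogorov's continuity criterion to control oscillations on each, take a union bound, and conclude by Borel-Cantelli along $R_n = 2^n$.

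\emph{Lower bound.} Here the strategy is localization plus independence plus a Paley-Zygmund inequality. Fix a scale $M = M(R)$ (to be tuned) and a grid $\{x_1,\dots,x_N\}\subset B(0,R)$ with pairwise distance $\geq 2M$ and $N \asymp (R/M)^d$. For each $i$ let $\tilde u_t^{(i)}(x_i)$ be the mild solution of (\ref{mild}) driven only by the restriction of $F$ to $[0,t]\times B(x_i,M)$; by disjointness of supports the family $\{\tilde u_t^{(i)}(x_i)\}_i$ is mutually independent. The Paley-Zygmund inequality $\P(|u_t(x)|^k \geq \theta\E|u_t(x)|^k) \geq (1-\theta)^2 (\E|u_t(x)|^k)^2/\E|u_t(x)|^{2k}$, combined with both directions of Theorem \ref{momentbounds}, produces a matching one-point lower tail $\P(u_t(x)>\lambda)\geq \exp(-c'(\log\lambda)^{\tau/(\tau-1)}t^{-1/(\tau-1)}\nu^{\beta/((\tau-1)(\alpha-\beta))})$ in the relevant range. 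Choosing $\lambda_R'$ of the same form as $\lambda_R$ but with a smaller constant $K_1$, this probability is $\geq R^{-d/2}$, so by independence $\P(\max_i \tilde u_t^{(i)}(x_i) \leq \lambda_R') \leq \exp(-NR^{-d/2})$, which is summable along $R_n = 2^n$ provided $M$ is chosen so that $N \gg R^{d/2}\log R$.

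To conclude, one replaces $\tilde u_t^{(i)}(x_i)$ by $u_t(x_i)$: the difference is a stochastic convolution whose driving noise is supported in the complement of $[0,t]\times B(x_i,M)$, and Burkholder-Davis-Gundy combined with the decay of the $\alpha$-stable heat kernel at large $|x|$ gives $\E|u_t(x_i)-\tilde u_t^{(i)}(x_i)|^p$ polynomially small in $M$. Balancing $M = R^\mu$ for small $\mu > 0$ preserves $NR^{-d/2}\to\infty$ while making the approximation error negligible at scale $\lambda_R'$, and a second Borel-Cantelli closes the argument. I expect the main obstacle to be the lower bound: the Paley-Zygmund step requires $\E|u_t(x)|^{2k}/(\E|u_t(x)|^k)^2$ to grow no faster than $\exp(O(k^{\tau}))$ so that optimizing in $k$ recovers the exact exponent promised by the theorem, and since for $\alpha<2$ the $\alpha$-stable heat kernel decays only polynomially in space, the localization is more delicate than in the Gaussian ($\alpha=2$) case and $M$ must be chosen carefully relative to the noise correlation parameter $\beta$.
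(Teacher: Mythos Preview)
Your upper bound argument is essentially the same as the paper's: Chebyshev plus optimization in $k$ gives the pointwise tail bound (the paper's Lemma~\ref{uppertail}), and the transfer to the supremum is done via a covering of $[-R,R]^d$ by unit cubes together with the H\"older estimate (Proposition~\ref{cty}) and Borel--Cantelli. That part is fine.

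The lower bound, however, has a genuine gap. You define $\tilde u_t^{(i)}(x_i)$ as the mild solution driven by the restriction of $F$ to $[0,t]\times B(x_i,M)$ and assert that ``by disjointness of supports'' the family $\{\tilde u_t^{(i)}(x_i)\}_i$ is independent. This would be true for space--time white noise, but here the noise is \emph{spatially colored} with Riesz covariance $f(x)=|x|^{-\beta}$, which is nowhere zero. Consequently, for disjoint Borel sets $A_1,A_2\subset\R^d$ one has
\[
\Cov\bigl(F([0,t]\times A_1),F([0,t]\times A_2)\bigr)=t\int_{A_1}\int_{A_2}\frac{\d y\,\d z}{|y-z|^{\beta}}\neq 0,
\]
so restricting the integration region alone does \emph{not} decouple the $\tilde u_t^{(i)}(x_i)$. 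Your Borel--Cantelli step for the lower bound therefore does not go through.

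The paper resolves this by a two-stage approximation (Section~3). First it truncates the \emph{covariance}: writing $f=h\ast\tilde h$ with $h(x)=|x|^{-(d+\beta)/2}$, it replaces $h$ by the compactly supported $h_n(x)=h(x)Q_n(x)$ and lets $F^{(n)}$ be the noise with covariance $g_n=h_n\ast\tilde h_n$; this covariance has compact support in $[-2n,2n]^d$. Second, it takes the $n$th Picard iterate $U_t^{(n,n)}$ of the localized equation~\eqref{utn}, so that $U_t^{(n,n)}(x_i)$ depends on $F^{(n)}$ only through a region of controlled diameter; Lemma~\ref{ind} then yields genuine independence when $|x_i-x_j|\ge 2n^{1+1/\alpha}t^{1/\alpha}$. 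The approximation error is controlled in Lemma~\ref{approx} via \eqref{rty}, which is exactly where the Riesz structure and the parameter $\beta$ enter. The difficulty you anticipated about the polynomial decay of the $\alpha$-stable kernel is real (it appears in Lemma~\ref{heat-corr}), but the primary obstruction you missed is the long-range correlation of the noise itself, not the heat kernel tails.
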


This theorem is a major improvement of \cite[Theorem 1.3]{CJK} (space-time white noise case) and \cite[Theorem 2.6]{CJKS2} (Riesz kernel spatial covariance). See also \cite{XC} for exact spatial asymptotics when then noise is fractional in  time and correlated in space. All these papers  deal with the  Parabolic Anderson model and  the usual Laplacian ($\alpha=2$). Moreover,  in  \cite{CJK, CJKS2}  the  dependence in  time of the  bounds is not explicit. The case $\sigma(x)=x$, fractional Laplacian and  Riesz kernel spatial covariance is considered in the  preprint \cite[Theorem 1.2]{KK}, without the  dependence on $\nu$ and constant initial data. Obtaining the exact dependence on the viscosity constant $\nu$ is important to understand in which universality class the equation can be associated (see \cite[Remark 1.5]{CJK}).
A key ingredient of the proof of Theorem \ref{asym} are the moment bounds of Theorem \ref{momentbounds}, that will allow to obtain some tail estimates for the solution.

Let us now consider an example where $u_0$ is not bounded below.
\begin{remark}
If $u_0(x):=1_{B(0,1)}(x)$, then one can show that for $x\in B(0,\,R)^c$ and $R$ large enough, we have
\begin{equation*}
\E u_t(x)=(p_t\ast u_0)(x)\leq \frac{ct}{R^{\alpha}}.
\end{equation*}
If we further assume that $\alpha>1$, then a Borel-Cantelli argument shows that $$\liminf_{|x|\rightarrow \infty}u_t(x)=0.$$This indicates that having initial conditions which are not bounded below can influence the behaviour of the solution drastically.
\end{remark}

The above remark can be seen as a motivation for us to drop the assumption that the initial function is bounded below. We have the following trichotomy result, that studies the amount of decay that the initial conditions needs to ensure that the solution is a bounded function a.s. For this result, we restrict ourselves to the case $\alpha=2$, so that the operator is the usual Laplacian instead of the fractional Laplacian.
\begin{theorem}\label{trichotomy}
Let $u$ be the unique solution to \textnormal{(\ref{mild})} with $\alpha=2$.
Assume \textnormal{(\ref{sig})} and that $u_0(x)$ is a radial function satisfying 
\begin{equation*}
\lim_{x\rightarrow \infty}u_0(x)=0\quad \text{and}\quad u_0(x)\leq u_0(y)\quad\text{whenever}\quad x\geq y.
\end{equation*}
Set 
\begin{equation*}
\Lambda:=\lim_{|x|\rightarrow \infty}\frac{|\log u_0(x)|}{(\log |x|)^{2/(2-\beta)}}.
\end{equation*}
Then, if $0<\Lambda<\infty$, there exists a random variable $T$ such that
\begin{align*}
\P\left(\sup_{x\in \R^d} u_t(x)<\infty, \quad \forall t<T\quad \text{and}\quad \sup_{x\in \R^d} u_t(x)=\infty, \quad \forall t>T\right)=1.
\end{align*}
Moreover, if $\Lambda=\infty$, then
\begin{align*}
\P\left(\sup_{x\in \R^d} u_t(x)<\infty, \quad \forall t>0\right)=1.
\end{align*}
Finally, if $\Lambda=0$, then
\begin{align*}
\P\left(\sup_{x\in \R^d} u_t(x)=\infty, \quad \forall t>0\right)=1.
\end{align*}
\end{theorem}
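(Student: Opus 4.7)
The plan is to analyze $u_t$ on dyadic annuli $A_n:=\{x\in\R^d:2^n\leq|x|<2^{n+1}\}$ and apply a Borel--Cantelli argument in both directions, exploiting the localization afforded by the Gaussian heat kernel (available because $\alpha=2$). By the radial monotonicity of $u_0$, the initial data on $A_n$ is pinned between $u_0(2^{n+1})$ and $u_0(2^n)$, both of order $\exp(-\Lambda(n\log 2)^{2/(2-\beta)})$ when $0<\Lambda<\infty$.

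\emph{Upper-bound direction.} First I would establish a localized version of Theorem \ref{momentbounds} in which the global $\bar u_0$ is replaced by an effective local initial condition at $x$, controlled by $(p_t\ast u_0)(x)$. For $\alpha=2$ the Gaussian heat kernel decays super-polynomially, so the far-away contribution of $u_0$ to $u_t(x)$ is negligible and the effective initial condition is $\sim u_0(|x|)$ for $|x|$ large. Markov's inequality then produces a pointwise tail bound $\P(u_t(x)>1)\leq \exp(-\Phi_t(|x|))$, with $\Phi_t(R)$ growing in $R$ at a rate depending on $t$ and $\Lambda$. A union bound over $\sim 2^{nd}$ unit sub-balls of $A_n$ gives $\P(\sup_{A_n} u_t>1)\leq \exp\!\bigl(nd\log 2-\Phi_t(2^n)\bigr)$, and for $t$ below a critical threshold $T(\Lambda)$ this is summable in $n$; Borel--Cantelli then yields $\sup_x u_t(x)<\infty$ almost surely.

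\emph{Lower-bound direction.} For $t$ above the threshold, I would minorize $u_0\geq u_0(R)\mathbf{1}_{B(0,R)}$ and use the comparison Theorem \ref{mct} in its pathwise form to obtain $u_t\geq v^R_t$, where $v^R$ solves the SPDE started from the truncated constant initial data $u_0(R)\mathbf{1}_{B(0,R)}$. On the bulk $B(0,R/2)$ the boundary corrections from the truncation are exponentially small (Gaussian decay of $p_s$ for $s\leq t$), so the moment lower bound of Theorem \ref{momentbounds} applies to $v^R$ as though the initial data were $u_0(R)$ everywhere. A Paley--Zygmund estimate on a grid of well-separated sites in $B(0,R/2)$, together with the asymptotic independence of the noise at separations $\gg\sqrt t$, shows that $\P\!\bigl(\sup_{B(0,R/2)} v^R_t>1\bigr)\to 1$ as $R\to\infty$ whenever $t>T(\Lambda)$. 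Letting $R\to\infty$ gives $\sup_x u_t(x)=\infty$ almost surely.

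\emph{Threshold, edge cases, and main obstacle.} The single transition time arises from a monotonicity property: the event $\{\sup_x u_t(x)=\infty\}$ is non-decreasing in $t$, which follows from the mild formulation together with $\sigma(0)=0$ and the Lipschitz lower bound on $\sigma$ (an unbounded initial profile cannot be smoothed back into a bounded function by the stochastic convolution). Defining $T:=\sup\{t>0:\sup_x u_t(x)<\infty\}$ then yields the stated trichotomy. The degenerate cases $\Lambda=\infty$ (giving $T=\infty$, so $\sup u_t<\infty$ for all $t>0$) and $\Lambda=0$ (giving $T=0$, so $\sup u_t=\infty$ for all $t>0$) follow from the same arguments by letting the critical threshold degenerate accordingly. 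The main technical difficulty, on which the whole proof rests, is the localized moment bound: Theorem \ref{momentbounds} carries the global $\bar u_0$, and to obtain the correct critical time one must track the local decay of $u_0$ near $x$. The restriction to $\alpha=2$ enters essentially here, as the super-polynomial decay of the Gaussian kernel is what allows the cut-off-and-iteration localization; for $\alpha<2$ the fractional heat kernel has polynomial tails and would spoil the argument.
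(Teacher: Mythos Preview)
Your overall architecture---localize the initial data, get tail bounds in both directions, and run Borel--Cantelli on a lattice of points---matches the paper's strategy, and you correctly identify that the localization of Theorem~\ref{momentbounds} is the crux and that $\alpha=2$ is needed there. However, two steps in your outline are genuine gaps rather than technicalities.

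\textbf{The independence claim is not justified.} In the lower-bound direction you invoke ``asymptotic independence of the noise at separations $\gg\sqrt t$''. But the spatial covariance here is the Riesz kernel $f(x)=|x|^{-\beta}$, which has only polynomial decay; the noise is genuinely long-range correlated, and values of $u_t$ at well-separated sites are \emph{not} approximately independent in any usable quantitative sense. The paper spends an entire section (Section~3) building a surrogate: one truncates both the correlation (replacing $F$ by a compactly-supported $F^{(n)}$) and the domain of the stochastic integral, defines Picard iterates $U_t^{(n,n)}(x)$, proves these are exactly independent at separation $\ge 2n^{1+1/\alpha}t^{1/\alpha}$ (Lemma~\ref{ind}), and then shows $\E|u_t(x)-U_t^{(n,n)}(x)|^k\le c\,n^{-\gamma k/2}e^{ck^{(2\alpha-\beta)/(\alpha-\beta)}t}$ (Lemma~\ref{approx}). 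Your Paley--Zygmund step on a grid cannot be made rigorous without this or an equivalent device.

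\textbf{The localization is more than ``cut off and iterate''.} You flag the localized moment bound as the main obstacle but do not indicate how to obtain it. The paper does not prove such a bound directly; instead it proves an insensitivity estimate (Theorem~\ref{sensitivity}): if $u_0=v_0$ on $B(a,2R)$ then $\sup_{x\in B(a,R)}\E|u_t(x)-v_t(x)|^2\le g(t)\|u_0-v_0\|_\infty^2 e^{-R^2/t}$, relying on second-moment recursions from \cite{ChenKim}. This lets one compare $u_t$ near $a$ to the solution with \emph{constant} initial data $u_0(c|a|)$, to which Lemmas~\ref{uppertail} and~\ref{lowertail} apply directly, yielding the two-sided tail estimate (Theorem~\ref{tail:asymp}). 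Your dyadic-annulus union bound would need exactly this ingredient.

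Two smaller points: the pathwise minorization you want is Theorem~\ref{weakc}, not Theorem~\ref{mct} (which is only a moment comparison); and your monotonicity-in-$t$ argument for the single threshold (``an unbounded profile cannot be smoothed back'') is heuristic---the paper instead defines $T:=\lim_N T_N$ with $T_N:=\inf\{t:\sup_x u_t(x)\ge N\}$, having already shown that the supremum is a.s.\ finite for small $t$ and a.s.\ infinite for large $t$.
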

This result is an extension of \cite[Theorem 1.1]{CDKh}, where the case $\alpha=2$ and space-time white noise is  considered. The  proof of their result is  based on the technical Lemma 
\cite[Lemma 2.3]{CDKh} which follows  the ideas of \cite{Chen-Dalang}. Here, we 
use the extension to the spatially colored noise case developed in \cite{ChenKim}. The extension of those techniques to the fractional Laplacian are not straightforward and thus remain open for future work.

Observe that when $u_0$ has compact support corresponds to  the case where $\Lambda=\infty$, and Theorem \ref{trichotomy} shows that the  solution  is bounded for all times a.s.

The second part of  this paper  is motivated by the following  weak comparison principle.
\begin{theorem} \label{weakc}\textnormal{\cite{FLJ}}
Suppose that $u$ and $v$ are two solutions to \eqref{mild} with initial conditions $u_0$ and $v_0$ respectively such that $u_0\leq v_0$. Then
\begin{equation*}
\P(u_t(x)\leq v_t(x)\quad \text{for all}\quad x\in \R^d,\quad t\geq 0)=1.
\end{equation*}
\end{theorem}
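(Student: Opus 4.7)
The plan is to reduce the inequality to a positivity claim for a linear mild SPDE and then establish it via a smooth-noise approximation. Set $w_t(x):=v_t(x)-u_t(x)$; subtracting the two mild formulations \eqref{mild} gives
\begin{align*}
w_t(x)=g_t(x)+\int_0^t\!\int_{\R^d}p_{t-s}(x-y)\,\lambda_s(y)\,w_s(y)\,F(\d s\,\d y),
\end{align*}
where $g_t(x):=(p_t\ast(v_0-u_0))(x)\ge 0$ by positivity of the heat kernel $p_t$, and
\begin{align*}
\lambda_s(y):=\mathbf{1}_{\{w_s(y)\neq 0\}}\,\frac{\sigma(v_s(y))-\sigma(u_s(y))}{w_s(y)}
\end{align*}
is a predictable random field bounded by $L_\sigma$ in absolute value. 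Thus it suffices to show that this linear mild SPDE, with bounded random coefficient $\lambda$ and nonnegative source $g$, admits a nonnegative solution.

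To do so I would mollify the noise. Let $\phi_\epsilon$ be a standard spatial approximation of the identity, set $\dot F^\epsilon:=\dot F\ast\phi_\epsilon$, and let $u^\epsilon,v^\epsilon$ denote the solutions to \eqref{mild} driven by $\dot F^\epsilon$. For each $\omega$, $\dot F^\epsilon(t,\cdot,\omega)$ is spatially smooth, and the Walsh-Dalang $L^2$-stability estimates that underlie the existence-uniqueness theory yield $u^\epsilon_t(x)\to u_t(x)$ and $v^\epsilon_t(x)\to v_t(x)$ in $L^2(\Omega)$ as $\epsilon\to 0$; hence it is enough to prove $v^\epsilon\ge u^\epsilon$ for each fixed $\epsilon>0$. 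For the smoothed equation $w^\epsilon:=v^\epsilon-u^\epsilon$ satisfies a linear mild equation with coefficient bounded by $L_\sigma$ and nonnegative forcing $g$. Testing against a smooth convex approximation of $(\cdot)^-$, using positivity-preservation of $p_t$, and applying the Riesz-kernel identity $\int\!\int p_{t-s}(x-y_1)p_{t-s}(x-y_2)f(y_1-y_2)\,\d y_1\d y_2\lesssim(t-s)^{-\beta/\alpha}$, one arrives at a Gronwall inequality on $\sup_x\E\bigl[((w^\epsilon_t(x))^-)^2\bigr]$ whose only solution is zero. Passing $\epsilon\to 0$ then gives $v_t(x)\ge u_t(x)$ a.s.\ for each $(t,x)$, and a Kolmogorov continuity argument in $(t,x)$ upgrades this to the uniform pathwise statement.

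The main obstacle is closing the Gronwall inequality: a naive Cauchy-Schwarz bound on the stochastic integral produces $\E[(w^\epsilon)^2]$ rather than $\E[((w^\epsilon)^-)^2]$ on the right, which is not enough to iterate. The smoothing step is what allows this obstruction to be circumvented, because for each $\omega$ the equation for $w^\epsilon$ becomes an honest PDE to which a pathwise maximum-principle argument---Kato's inequality when $\alpha=2$, or its nonlocal analogue obtained from the L\'evy-Khintchine representation of $-(-\Delta)^{\alpha/2}$ when $\alpha<2$---can be applied directly, yielding a Gronwall inequality that now only involves $(w^\epsilon)^-$. Once $(w^\epsilon)^-\equiv 0$ is established at each $\epsilon>0$, the $L^2$ convergence propagates the inequality to the limit and completes the proof.
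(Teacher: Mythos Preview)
The paper does not prove this theorem; it is quoted from \cite{FLJ} and used as an input throughout. So there is no in-paper argument to compare your proposal against, and I can only assess the proposal on its own merits.

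Your outline follows a standard and reasonable route (linearize the difference, regularize the noise, apply a maximum principle, pass to the limit), but there is a genuine gap at the key step. You mollify only in space: $\dot F^\epsilon=\dot F\ast\phi_\epsilon$ with $\phi_\epsilon$ a \emph{spatial} approximate identity. This makes the driving field smooth in $x$ but leaves it white in $t$. Consequently the equation for $w^\epsilon$ is still a genuine stochastic equation, not ``an honest PDE for each $\omega$'': the time paths $t\mapsto w^\epsilon_t(x)$ are at best H\"older continuous of order $<\tfrac12$, there is no classical time derivative, and neither Kato's inequality nor a pathwise nonlocal maximum principle is available. Your stated mechanism for circumventing the $(w^\epsilon)^2$ versus $((w^\epsilon)^-)^2$ obstruction therefore does not work as written.

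There are two standard fixes. The first is to regularize \emph{also in time} (or use a Wong--Zakai/discretization step), so that the approximating equation really is a deterministic PDE with smooth random coefficients and a pathwise maximum-principle argument applies; one then needs a separate stability estimate to pass to the limit. This is essentially the route taken in \cite{FLJ}. The second is to keep only spatial smoothing but replace the pathwise argument by a stochastic one: apply It\^o's formula to a smooth convex approximation of $r\mapsto (r^-)^2$ using the strong (variational) form of the smoothed equation. On $\{w^\epsilon<0\}$ one has $w^\epsilon=-(w^\epsilon)^-$, so the quadratic-variation term contributes only $((w^\epsilon)^-)^2$ and the Gronwall loop closes. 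Either repair requires nontrivial additional work (regularity for the strong formulation, or convergence of the time-regularized scheme), which your proposal does not supply.
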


Theorem \ref{weakc} ensures nonnegativity of the solution, since the initial condition is assumed to be nonnegative.
For the Parabolic Anderson model, this fact can be deduced from the Feynman-Kac representation of the solution. However, for the general non-linear case, this property for the solution to (\ref{mild}) was unknown until the work of 
\cite{FLJ}.

In this paper we use Theorem \ref{weakc} in order to  show the following strong comparison principle. 
\begin{theorem}\label{strongcomparison}
Suppose that $u$ and $v$ are two solutions to \eqref{mild} with initial conditions $u_0$ and $v_0$ respectively such that $u_0< v_0$. Assume $\alpha \geq 1$. Then
\begin{equation*}
\P(u_t(x)< v_t(x)\quad \text{for all}\quad x\in \R^d,\quad t\geq 0)=1.
\end{equation*}
\end{theorem}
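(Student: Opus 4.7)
The plan is to reduce the strong comparison to strict positivity of $w_t(x) := v_t(x) - u_t(x)$. By Theorem \ref{weakc} applied to the pair $(u,v)$ we already have $w_t(x) \geq 0$ almost surely for every $(t,x)$, so the remaining task is to rule out, simultaneously in $(t,x)$, the event that $w$ vanishes.

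Subtracting the two mild equations \eqref{mild} gives the linearised representation
\[
w_t(x) = g_t(x) + \int_0^t\!\int_{\R^d} p_{t-s}(x-y)\,a_s(y)\,w_s(y)\,F(\d s\,\d y),
\]
where $g_t(x) := (p_t \ast (v_0 - u_0))(x)$ and
\[
a_s(y) := \frac{\sigma(v_s(y)) - \sigma(u_s(y))}{v_s(y) - u_s(y)}\,\ind_{\{v_s(y) \neq u_s(y)\}}
\]
is adapted and bounded by $L_\sigma$. Since $u_0 < v_0$ by assumption, the drift satisfies $g_t(x) > 0$ for every $t>0$ and $x \in \R^d$.

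The analytic heart of the proof is a small-ball estimate for $w_t(x)$. Following the colored-noise adaptation of the Chen--Dalang method developed in \cite{ChenKim}, I would establish, for each fixed $(t,x) \in (0,\infty) \times \R^d$, a bound of the form
\[
\P\bigl(w_t(x) \leq \eta\, g_t(x)\bigr) \leq C_t \exp\bigl(-c_t (\log(1/\eta))^{\gamma}\bigr) \qquad \text{as } \eta \downarrow 0,
\]
for some $\gamma > 1$. The method partitions $[0,t]$ into short time slices, freezes the coefficient $a$ on each slice by conditioning on the preceding history, and applies Gaussian concentration to the resulting linear Gaussian increments; the required covariance integrals converge thanks to $\beta < \alpha$. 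In particular this yields $\E[w_t(x)^{-p}] < \infty$ for every $p \geq 1$.

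Finally, the pointwise estimate is upgraded to a uniform statement by a standard chaining argument: the moment bounds behind Theorem \ref{momentbounds} yield, via Kolmogorov, H\"older continuity of $(t,x) \mapsto w_t(x)$ on every compact subset of $(0,\infty) \times \R^d$; combining this with the small-ball bound on a sufficiently fine space-time grid gives
\[
\P\Bigl(\inf_{(t,x) \in K} w_t(x) > 0\Bigr) = 1
\]
for every such compact $K$, and a countable exhaustion together with the boundary data $w_0 = v_0 - u_0 > 0$ at $t = 0$ finishes the argument. The main obstacle is the small-ball estimate, and it is in that step, through the spatial regularity of the heat kernel needed to freeze the coefficient $a$ on each slice and to make the chaining work, that the restriction $\alpha \geq 1$ enters.
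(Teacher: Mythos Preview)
Your approach---linearise to obtain $w=v-u$, prove a pointwise small-ball bound for $w_t(x)$, then upgrade via chaining---is genuinely different from the paper's route, which follows Mueller's iteration directly: it proves that a solution started from $\ind_{B(0,R)}$ stays strictly positive by showing, on each of $2m$ short time slices, that the deterministic drift dominates an indicator of a slightly larger ball (Proposition~\ref{initial}) while the stochastic integral is uniformly small with high probability (Proposition~\ref{stoch}); the product of conditional probabilities $(1-c_m)^{2m-1}$ then tends to $1$. The reduction from the strong comparison to this statement is via the weak comparison principle and the observation that the difference $w$ obeys a mild equation whose coefficient is still bounded by $L_\sigma|w|$, so the same iteration applies verbatim.

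The gap in your proposal is the small-ball estimate itself. You assert
\[
\P\bigl(w_t(x)\le \eta\,g_t(x)\bigr)\le C_t\exp\bigl(-c_t(\log(1/\eta))^\gamma\bigr),\qquad \gamma>1,
\]
but the sketch (``slice, freeze, apply Gaussian concentration'') is not a proof, and the reference to \cite{ChenKim} does not supply one: that paper furnishes two-point moment comparisons of the type used in Theorem~\ref{sensitivity}, not lower-tail probabilities. The only known way to obtain such small-ball bounds for this class of equations is precisely the Mueller iteration that the paper carries out (and that underlies Theorem~\ref{positivity} as well). So your proposal defers the entire difficulty to a lemma you have not established, and the natural proof of that lemma is the argument you were trying to avoid.

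Your identification of where $\alpha\ge 1$ enters is also off. It is not a matter of ``spatial regularity of the heat kernel'' or of freezing $a$; the H\"older and covariance estimates you invoke depend only on $\alpha-\beta>0$. In the paper's argument the restriction is purely geometric: the balls $B^m_k=B(0,R+kM(t/m)^{1/\alpha})$, $k\le 2m-1$, must exhaust $\R^d$ as $m\to\infty$, which forces $m\cdot m^{-1/\alpha}\to\infty$, i.e.\ $\alpha\ge 1$. Remark~\ref{counterexample} shows explicitly that the drift from an indicator initial datum fails to stay bounded below on the required enlarged ball when $\alpha<1$, so the iteration collapses. If your small-ball method genuinely worked as sketched, it would not need $\alpha\ge 1$ at all, which should itself signal that something is missing.
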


The (strong) comparison principle for equation (\ref{main-eq})  with  space-time white noise and $\alpha=2$ is  the  well-known Mueller's comprison principle (see \cite{Mueller}). Recently, several extensions have been developed. In \cite{CK} the authors extend Mueller's result when the initial  data is more general and there is a more general fractional differential operator than the fractional Laplacian. In  \cite{ChenHuang} the authors  consider  the  non-linear heat equation  in $\R^d$ with a general spatial covariance and measured-valued initial data. The proof  of our  strong comparison principle uses the same strategy as in the papers mentioned above. But the presence of the fractional Laplacian and the colored noise makes it that we have to work a bit harder to prove our result. Moreover, the method of the proof does not seem to extend to the case $\alpha<1$; see Remark \ref{counterexample} for more details.  Among other things, we provide a simplification of the proofs of \cite{ChenHuang} and \cite{CK}. For the  sake of conciseness, we only consider the Riesz kernel spatial covariance. The  extension to general spatial covariances as in \cite{ChenHuang} is left as further work.

As another consequence of the weak comparison  principle (Theorem \ref{weakc}), we show the  next quantitative  result on  the  strict positivity of the solution, which is an extension of \cite[Theorem 5.1]{CJKCorr} (space-time white noise and $\alpha=2$). See also \cite[Theorem 1.4]{CK} and \cite[Theorem 1.6]{ChenHuang}. Not that $\alpha$ is not required to be bigger than $1$.
\begin{theorem}\label{positivity}
Let $T>0$ and $K\subset \R^d$ be a compact set contained in the support of the initial condition $u_0$. Then, there exist constants $c_1$ and $c_2$ depending on $T$ and $K$ such that for all $\epsilon>0$, we have

$$
\P\left( \inf_{t\in [0,\,T]}\inf_{x\in K}u_t(x) < \epsilon \right) \leq c_2 \exp \left( -c_1 \{\vert \log \epsilon \vert  \log \vert \log \epsilon \vert\}^{\frac{2\alpha-\beta}{\alpha}}\right).
$$
\end{theorem}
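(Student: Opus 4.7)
The plan is to combine the weak comparison principle (Theorem \ref{weakc}) with a pointwise negative-moment bound whose exponent mirrors that of Theorem \ref{momentbounds}, and then upgrade it to a uniform bound over $[0,T]\times K$ by a Kolmogorov-type chaining argument. This follows the strategy of \cite{CJKCorr,CK,ChenHuang}, adapted here to the $\alpha$-stable semigroup and the Riesz-kernel covariance.

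First, I would use Theorem \ref{weakc} to reduce to a convenient initial condition. Since $K$ is compact and contained in $\mathrm{supp}(u_0)$, one can find $\delta>0$ and a ball $B\subset\mathrm{supp}(u_0)$ with $\tilde u_0:=\delta\,\mathbf 1_B\le u_0$ pointwise; Theorem \ref{weakc} then yields $\tilde u_t(x)\le u_t(x)$ a.s., so it suffices to prove the tail bound for $\tilde u$. The advantage is that the deterministic part $(p_t\ast\tilde u_0)(x)$ is strictly positive, and combined with stable heat-kernel estimates is uniformly bounded below by a positive constant on $[t_0,T]\times K$ for any fixed $t_0>0$. The short-time window $t\in[0,t_0]$ is handled separately using the continuity of $\tilde u$ and the fact that $\tilde u_0\ge\delta$ on $B$.

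The core step is to establish a pointwise negative-moment bound of the form
\begin{equation*}
\E\bigl[\tilde u_t(x)^{-p}\bigr]\le \exp\!\bigl(C p^{(2\alpha-\beta)/(\alpha-\beta)}\bigr),\qquad p\ge 2,\ (t,x)\in[t_0,T]\times K,
\end{equation*}
with a constant $C=C(T,K,\nu,\delta)$, whose exponent $(2\alpha-\beta)/(\alpha-\beta)$ matches the positive-moment exponent in Theorem \ref{momentbounds}. The derivation uses the mild formulation \eqref{mild}: one applies an Ito-type expansion to a regularization of $(\tilde u_t(x))^{-p}$, invokes only the Lipschitz bound $|\sigma(y)|\le L_\sigma|y|$ to identify the quadratic-variation term, and iterates. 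The interplay of $\alpha$-stable heat-kernel scaling and the Riesz-kernel integrability is what produces the exponent $(2\alpha-\beta)/(\alpha-\beta)$, exactly as for positive moments. Markov's inequality and optimization in $p$ (the optimum lies at $p\sim|\log\epsilon|^{(\alpha-\beta)/\alpha}$) then give the pointwise tail bound $\P(\tilde u_t(x)<\epsilon)\le \exp\!\bigl(-c_1|\log\epsilon|^{(2\alpha-\beta)/\alpha}\bigr)$.

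To finish, I would promote the pointwise estimate to a uniform one over $[0,T]\times K$: cover the set by a grid of mesh $\delta_\epsilon$ consisting of at most $N_\epsilon\le c\,\delta_\epsilon^{-(d+1)}$ points, apply the pointwise bound with a union bound, and control the oscillation between neighboring grid points via Kolmogorov's criterion, using standard $L^p$-increment estimates for the stochastic integral in \eqref{mild}. Choosing $\delta_\epsilon$ to be a small polynomial in $\epsilon$ absorbs the factor $N_\epsilon$ into the exponent and produces exactly the additional $\log|\log\epsilon|$ correction appearing in the stated bound. The main obstacle is the negative-moment step: Theorem \ref{positivity} does not assume \eqref{sig}, so Theorem \ref{momentbounds} cannot be invoked directly for negative moments, and the exponential-martingale / iteration argument must be pushed through for a general Lipschitz $\sigma$, carefully combining $\alpha$-stable heat-kernel estimates with the spatial integrability of $|x|^{-\beta}$ to pin down the exponent $(2\alpha-\beta)/\alpha$.
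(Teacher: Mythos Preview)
Your approach diverges substantially from the paper's, and it has a genuine gap.

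The paper's (omitted) proof follows \cite{CJKCorr,CK,ChenHuang}: one reduces via Theorem~\ref{weakc} to an indicator initial datum, then introduces stopping times $\tau_0=0$, $\tau_n=\inf\{t>\tau_{n-1}:\inf_{x\in K}u_t(x)\le c_1^{\,n}\}$, and uses the \emph{strong Markov property} (Lemma~\ref{markov}) together with Proposition~\ref{stoch} to bound $\P(\tau_n-\tau_{n-1}\le t/(2m)\mid\mathcal F_{\tau_{n-1}})\le c_m$. Summing over $n\sim |\log\epsilon|/|\log c_1|$ steps and optimizing in $m$ yields the stated tail. The extra $\log|\log\epsilon|$ factor arises \emph{inside} Proposition~\ref{stoch}: the optimization of $\rho^{\tilde\eta k}\exp(Ak^{(2\alpha-\beta)/(\alpha-\beta)}\rho)$ over $k$ with $\rho=t/m$ produces $c_m=\exp(-c\,m^{(\alpha-\beta)/\alpha}(\log m)^{(2\alpha-\beta)/\alpha})$, and it is this $\log m$ that becomes $\log|\log\epsilon|$ after choosing $m\asymp|\log\epsilon|$.

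Your route bypasses the strong Markov property entirely and rests on a direct negative-moment estimate $\E[\tilde u_t(x)^{-p}]\le\exp(Cp^{(2\alpha-\beta)/(\alpha-\beta)})$, obtained by ``an It\^o-type expansion to a regularization of $(\tilde u_t(x))^{-p}$''. This is the gap. The mild solution \eqref{mild} is not a semimartingale in $t$ for fixed $x$, so there is no It\^o formula available for $u_t(x)^{-p}$; the known iteration/Gronwall arguments produce \emph{positive} moments only, because one bounds $|\sigma(u)|\le L_\sigma|u|$ and then closes an inequality for $\sup_x\E|u_t(x)|^k$. For negative moments the inequality goes the wrong way and does not close. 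In the literature the logic runs in the opposite direction: small-ball estimates of the type in Theorem~\ref{positivity} are proved first (via stopping times and Proposition~\ref{stoch}), and negative moments are then \emph{deduced} from them. Without \eqref{sig} and without Feynman--Kac, no direct negative-moment bound of the claimed form is available for general Lipschitz $\sigma$.

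There is also a sign error in your accounting of the logarithm. Your pointwise bound is $\exp(-c_1|\log\epsilon|^{(2\alpha-\beta)/\alpha})$, and you claim the chaining/union bound ``produces exactly the additional $\log|\log\epsilon|$ correction''. But a union bound over $N_\epsilon$ grid points multiplies the probability by $N_\epsilon$, i.e.\ \emph{weakens} the exponent; it cannot improve the pointwise rate to the stated $\exp(-c_1\{|\log\epsilon|\log|\log\epsilon|\}^{(2\alpha-\beta)/\alpha})$, which decays strictly faster. The $\log|\log\epsilon|$ is not a covering penalty; it comes from the time discretization in Proposition~\ref{stoch} as described above, and any argument that does not pass through that mechanism (or an equivalent one) will miss it.
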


\vskip 12pt
We now give a plan of the article. In Section 2 we give some preliminary results needed  throughout the paper.
Section 3 is devoted to an approximation result needed for the proof of Theorems \textnormal{\ref{asym}} and \textnormal{\ref{trichotomy}}. These theorems are proved in Section 4. Finally, Section  5 gives the proof of Theorems \textnormal{\ref{strongcomparison}} and \textnormal{\ref{positivity}}.

\section{Preliminary results}
Let $X_t$ be the symmetric $\alpha$-stable process associated with the fractional Laplacian $-\nu (-\Delta)^{\alpha/2}$ and let $p_t(x)$ denote its heat kernel. We will frequently use the following properties.

\begin{itemize} 
\item  {\it Scaling property}: For any positive constant $a$, we have 
\begin{align*}
p_t(x)=a^dp_{a^\alpha t}(ax), \quad \text{ for all } x \in \R^d, t >0. 
\end{align*}
This property follows from 
\begin{equation*}
p_t(x)=(2\pi)^{-d}\int_{\R^d}e^{-ix\cdot z}e^{-t\nu|z|^\alpha}\,\d z.
\end{equation*}
\item {\it Heat kernel estimates} (see \cite{kolo} and references therein): For $0<\alpha<2$, there exist positive constants $c_1$ and $c_2$ such that  for all $x \in \R^d$ and $t>0$,
\begin{align*}
c_1\left(\frac{1}{t^{d/\alpha}}\wedge\frac{t}{|x|^{d+\alpha}}\right)\leq p_t(x)\leq c_2\left(\frac{1}{t^{d/\alpha}}\wedge\frac{t}{|x|^{d+\alpha}}\right).
\end{align*}
\end{itemize}

\begin{remark}
The  proofs of Theorems \textnormal{\ref{asym}} and \textnormal{\ref{trichotomy}} will only  use the  upper bound
\begin{equation} \label{ee2}
p_t(x) \leq c_2\frac{t}{\vert x\vert^{d+\alpha}},\quad  \text{ for sufficiently large } \vert x \vert,
\end{equation}
while the proof of Theorems \textnormal{\ref{strongcomparison}} and \textnormal{\ref{positivity}} will only use the lower bound
\begin{equation} \label{ee1}
p_t(x) \geq c_1 \frac{1}{t^{d/\alpha}},\quad  \text{ for sufficiently small } \vert x \vert.
\end{equation}
Both are also valid for $\alpha=2$.
\end{remark}

The next result provides some estimates  that involve the above heat kernel and the correlation function $f$. They will be useful for proving Lemma \ref{approx}.

\begin{lemma}\label{heat-corr}
There exist positive constants $c_1,c_2$ and $c_3$ such that for all $t>0$, $x\in \R^d$, and $R>0$, we have
\begin{align} \label{a1}
\int_{B(x,\,R)^c\times B(x,\,R)^c}p_{t}(x-y)p_{t}(x-w)f(y-w)\,\d y\,\d w &\leq c_1 \frac{t^2}{R^{2\alpha+\beta}},\\ \label{a2}
\int_{B(x,\,R)^c\times B(x,\,R)}p_{t}(x-y)p_{t}(x-w)f(y-w)\,\d y\,\d w&\leq c_2 \frac{t^{1-\beta/\alpha}}{R^\alpha},\\ \label{a3}
\int_{\R^d\times \R^d}p_{t}(x-y)p_{t}(x-w)f(y-w)\,\d y\,\d w&\leq c_3 t^{-\beta/\alpha}.
\end{align}
\end{lemma}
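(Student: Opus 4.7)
The plan is to translate each of the three integrals by setting $u=y-x$, $v=w-x$, so that the fractional Laplacian heat kernel's translation invariance makes the center $x$ irrelevant, and then exploit the scaling $p_t(u)=t^{-d/\alpha}p_1(t^{-1/\alpha}u)$ together with the tail estimate \eqref{ee2}.

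I would first handle \eqref{a3}, since the other two estimates reuse its ingredients. By the substitution $u=t^{1/\alpha}u'$, $v=t^{1/\alpha}v'$, the Jacobians absorb the $t^{-d/\alpha}$ factors from both heat kernels, while $|u-v|^{-\beta}$ contributes a $t^{-\beta/\alpha}$, leaving
\[
\int_{\R^d\times\R^d} p_t(u)p_t(v)|u-v|^{-\beta}\,du\,dv \;=\; t^{-\beta/\alpha}\int_{\R^d\times\R^d} p_1(u')p_1(v')|u'-v'|^{-\beta}\,du'\,dv'.
\]
The integral at $t=1$ is finite: split according to whether $|u'-v'|\le 1$ or $>1$, using $\|p_1\|_\infty<\infty$ together with local integrability of $|\cdot|^{-\beta}$ (since $\beta<d$) in the first region, and the integrability of $p_1$ in the second. (Alternatively, Plancherel together with $\widehat{|\cdot|^{-\beta}}(\xi)=c|\xi|^{-(d-\beta)}$ reduces this directly to $c\int e^{-2t\nu|\xi|^\alpha}|\xi|^{-(d-\beta)}d\xi=ct^{-\beta/\alpha}$ by the same scaling.)

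For \eqref{a2}, I would drop the restriction $|v|\le R$ (using nonnegativity) to bound the inner integral by the convolution $(p_t*f)(u)=\int p_t(v)|u-v|^{-\beta}dv$. Repeating the scaling argument as in \eqref{a3} shows $(p_t*f)(u)\le c\,t^{-\beta/\alpha}$ uniformly in $u$, since $(p_1*|\cdot|^{-\beta})(u')$ is bounded uniformly (again split $|w|\lessgtr 1$). For the remaining outer integral over $|u|\ge R$, apply the tail bound \eqref{ee2} to get $\int_{|u|\ge R} p_t(u)\,du\le c\int_{|u|\ge R} t|u|^{-(d+\alpha)}\,du \le c\,t/R^\alpha$. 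Combining yields $c\,t^{1-\beta/\alpha}/R^\alpha$, which is \eqref{a2}.

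The most delicate piece is \eqref{a1}, where applying the bound $(p_t*f)(u)\le ct^{-\beta/\alpha}$ would lose too much decay in $R$. Instead, I would use \eqref{ee2} on \emph{both} $p_t(u)$ and $p_t(v)$ to reduce matters to
\[
c\,t^2\int_{|u|\ge R}|u|^{-(d+\alpha)}\!\int_{|v|\ge R}|v|^{-(d+\alpha)}|u-v|^{-\beta}\,dv\,du,
\]
then estimate the inner integral by splitting the $v$-region on $\{|v-u|\le |u|/2\}$ and $\{|v-u|>|u|/2\}$. On the first, $|v|\asymp|u|$ lets me pull out $|u|^{-(d+\alpha)}$ and compute $\int_{|v-u|\le|u|/2}|u-v|^{-\beta}\,dv\le c|u|^{d-\beta}$ (using $\beta<d$), contributing $|u|^{-\alpha-\beta}$. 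On the second, $|u-v|^{-\beta}\le c|u|^{-\beta}$ pulls out of the integral and the remaining tail integral yields $R^{-\alpha}$, contributing $|u|^{-\beta}R^{-\alpha}$. Integrating the outer $|u|^{-(d+\alpha)}$ against the sum $|u|^{-\alpha-\beta}+|u|^{-\beta}R^{-\alpha}$ over $|u|\ge R$ gives $cR^{-2\alpha-\beta}$ from both terms, producing the desired $c\,t^2/R^{2\alpha+\beta}$. The main obstacle is ensuring the split in \eqref{a1} is clean enough that the two resulting terms both match the claimed exponent $2\alpha+\beta$, which is the only nontrivial algebraic bookkeeping in the proof.
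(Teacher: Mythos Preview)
Your proof is correct and follows essentially the same approach as the paper: translate to center at the origin, use the tail bound \eqref{ee2} on the heat kernel in the regions $|u|\ge R$, and exploit scaling for the Riesz convolution. The only cosmetic differences are that the paper handles \eqref{a3} via the semigroup identity $\int p_t(x-y)p_t(x-w)f(y-w)\,dy\,dw=(p_{2t}*f)(0)$ rather than your direct rescaling, and for \eqref{a1} the paper first scales out $R$ to reduce to the single claim that $\int_{B(0,1)^c\times B(0,1)^c}|y|^{-(d+\alpha)}|w|^{-(d+\alpha)}|y-w|^{-\beta}\,dy\,dw<\infty$, whereas your near/far splitting on $|u-v|\lessgtr|u|/2$ is precisely how one verifies that claim.
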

\begin{proof}
We start with (\ref{a1}).
\begin{align*}
\int_{B(x,\,R)^c\times B(x,\,R)^c}&p_{t}(x-y)p_{t}(x-w)f(y-w)\,\d y\,\d w\\
&\leq \int_{B(0,\,R)^c\times B(0,\,R)^c}p_t(y)p_t(w)f(y-w)\,\d y\,\d w.
\end{align*}
From (\ref{ee1}), the above quantity is bounded by a constant times
\begin{align*}
\frac{t^2}{R^{2\alpha+\beta}}\int_{B(0,\,1)^c\times B(0,\,1)^c}\frac{1}{\vert y\vert^{d+\alpha}\vert w\vert^{d+\alpha}}\frac{1}{\vert
y-w\vert^\beta}\,\d w\,\d y.
\end{align*}
The above integral is finite so the proof of (\ref{a1}) is complete. For (\ref{a2}), we write
\begin{align*}
\int_{B(x,\,R)^c\times B(x,\,R)}&p_{t}(x-y)p_{t}(x-w)f(y-w)\,\d y\,\d w\\
&\leq \int_{B(0,\,R)^c\times \R^d}p_t(y)p_t(w)f(y-w)\,\d y\,\d w.
\end{align*}
By the scaling property,
\begin{align*}
\int_{\R^d}p_t(w)f(y-w)\,\d w=\E^y|X_t|^{-\beta}\leq t^{-\beta/\alpha}\E^0|X_1|^{-\beta}.
\end{align*}
Finally, proceeding  as  before, we get
$$
\int_{B(0,\,R)^c}p_t(y) dy \leq c \frac{t}{R^\alpha}.
$$
Combining the above estimates, we obtain (\ref{a2}). For (\ref{a3}), it suffices to use the  semigroup property 
\begin{align*}
\int_{\R^d\times \R^d}p_{t}(x-y)p_{t}(x-w)f(y-w)\,\d y\,\d w=\int_{\R^d}p_{2t}(w)f(w)\,\d w,
\end{align*}
and using the scaling property as before  we obtain the  desired bound.
\end{proof}

We  now return to $u_t(x)$ the  solution to \textnormal{(\ref{mild})}.
Our next property can be read from \cite{BEM2010a}. For any $k \geq 2$, there exists a positive constant $c:=c(k)$ such that for all $s,t>0$, and $x,\,y\in\R^d $ 
\begin{align*} 
\E|u_s(x)-u_t(y)|^k\leq c\left(\vert x-y\vert^{\eta k}+|s-t|^{\tilde{\eta}k}\right),
\end{align*}
where $\eta=\frac{\alpha-\beta}{2}$ and $\tilde{\eta}=\frac{\alpha-\beta}{2\alpha}$.
The above together with the upper moment bound of Theorem \ref{momentbounds} has the following consequence.
\begin{proposition}\label{cty}
$u_t(x)$ has a continuous version, that is, for any $k \geq 2$, there exist positive constants $c_1,c_2:=c_2(k)$ such that
\begin{align*}
\E\left[\underset{x,\,y\in K\subset \R^d}{\sup_{x\not=y, s\not= t}}\frac{|u_s(x)-u_t(y)|^k}{\vert x-y\vert^{\eta k}+|s-t|^{\tilde{\eta}k}}\right]\leq c_2 e^{c_1k^{(2\alpha-\beta)/(\alpha-\beta)} \nu^{-\beta/(\alpha-\beta)}t}.
\end{align*}
\end{proposition}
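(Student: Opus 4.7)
The plan is to upgrade the pointwise $L^k$ modulus estimate of \cite{BEM2010a} to a moment bound on the Hölder seminorm via a quantitative Kolmogorov continuity theorem (Garsia-Rodemich-Rumsey). The substantive work is to track the dependence of the constant $c(k)$ in the BEM estimate on $k$ and $\nu$; the exponential in the target bound strongly suggests that this dependence comes precisely from applying Theorem \ref{momentbounds} to $\E|\sigma(u_r(y))|^k\leq L_\sigma^k\E|u_r(y)|^k$ inside the covariance.

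First I would revisit the derivation of the $L^k$ increment estimate. Splitting $u_s(x)-u_t(y)$ via \eqref{mild} into the deterministic part $p_s\ast u_0(x)-p_t\ast u_0(y)$ and the Walsh stochastic integral increment, the deterministic part is harmless because $u_0$ is bounded. For the stochastic part, Burkholder's inequality produces a factor of order $k^{k/2}$ and reduces the bound to the $k/2$-th power of
$$
\int_0^{s\vee t}\int_{\R^{2d}}\Delta(r,z,w;s,t,x,y)\,\sigma(u_r(z))\sigma(u_r(w))\,f(z-w)\,\d z\,\d w\,\d r,
$$
where $\Delta$ is the appropriate squared difference of heat kernels. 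Pulling out $L_\sigma^2\sup_{r\leq t,\,z\in\R^d}\E|u_r(z)|^k$ (controlled by Theorem \ref{momentbounds}) and estimating the remaining deterministic heat-kernel integral exactly as in the proof of Lemma \ref{heat-corr} produces the spatial modulus $|x-y|^{\eta k}$ and the temporal modulus $|s-t|^{\tilde\eta k}$. The outcome is an inequality
$$
\E|u_s(x)-u_t(y)|^k\leq c_2\,e^{c_1 k^{(2\alpha-\beta)/(\alpha-\beta)}\nu^{-\beta/(\alpha-\beta)}t}\bigl(|x-y|^{\eta k}+|s-t|^{\tilde\eta k}\bigr).
$$

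Second, I would apply the Garsia-Rodemich-Rumsey lemma on the compact parabolic cube $K\times[0,T]$ equipped with the metric $\rho((s,x),(t,y))=|x-y|^\eta+|s-t|^{\tilde\eta}$, with Young function $\Psi(u)=u^{k'}$ for some $k'>k$ chosen large enough that $k'\eta$ and $k'\tilde\eta$ exceed the box dimension of the parameter set. The lemma yields an expectation bound on the Hölder seminorm of the stated form, with the new exponent $k'$ in place of $k$; a final relabeling $k'\mapsto k$ (which only adjusts the constants $c_1,c_2$ by factors independent of $k$) gives exactly the stated estimate.

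The main obstacle is the bookkeeping of the $k$-dependence: the BDG constant contributes $k^{k/2}$, the GRR application contributes further polynomial-in-$k$ corrections, and one must check that all these sub-exponential-in-$k$ factors are dominated by the exponential $\exp(c k^{(2\alpha-\beta)/(\alpha-\beta)}\nu^{-\beta/(\alpha-\beta)}t)$ from Theorem \ref{momentbounds}. This is valid because the exponent $(2\alpha-\beta)/(\alpha-\beta)$ is strictly greater than $1$ (since $\beta<\alpha$), so the exponential grows faster in $k$ than $e^{k\log k}$ and can absorb the auxiliary factors after enlarging $c_1$.
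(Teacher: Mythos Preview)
Your proposal is correct and follows the same route the paper has in mind: the paper's proof is simply ``very similar to Theorem 4.3 of \cite{minicourse} and is therefore omitted,'' and what you sketch---combine the pointwise $L^k$ increment estimate (with the explicit exponential coming from Theorem~\ref{momentbounds} inserted into the BDG/Minkowski bound) with a quantitative Kolmogorov--GRR argument on a compact parabolic box---is exactly that standard argument. One minor remark: your last paragraph worries about absorbing the BDG factor $k^{k/2}$ into the exponential, but the statement explicitly allows $c_2=c_2(k)$, so this absorption is not needed to prove the proposition as written (it is, however, relevant for the later optimizations in $k$, and your observation that $(2\alpha-\beta)/(\alpha-\beta)>1$ makes the exponential dominate $e^{c k\log k}$ is correct and useful there).
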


\begin{proof}
The proof is very similar to Theorem 4.3 of \cite{minicourse} and is therefore omitted.
\end{proof}

We also have the following property.

\begin{lemma}\label{markov}
Fix $x\in \R^d$, then the solution $u_t(x)$ satisfies the strong Markov Property.
\end{lemma}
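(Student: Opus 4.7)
The plan is to derive the strong Markov property from a restart (flow) property for the mild solution, exploiting the fact that $F$ is white in time. The crucial input is that the noise increments after a stopping time are independent of the past $\sigma$-algebra and have the same law as the original noise, which is a direct consequence of the Gaussian structure of $F$ and the independence of its increments across disjoint time intervals.

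First I would establish the restart property at a deterministic time $s\geq 0$. Define the time-shifted noise $F^{(s)}$ by $F^{(s)}((a,b]\times A):=F((s+a,s+b]\times A)$ for $0\leq a<b$ and Borel $A\subset\R^d$. Then $F^{(s)}$ has the same law as $F$ and is independent of $\cF_s$. Setting $\widetilde u_t(x):=u_{s+t}(x)$, splitting the stochastic integral in (\ref{mild}) at time $s$, and using the semigroup identity $p_{t+s-r}=p_{s-r}\ast p_t$ together with a stochastic Fubini argument, one verifies that $\widetilde u$ solves the mild equation with initial datum $u_s$ driven by $F^{(s)}$. By uniqueness of the mild solution under the global Lipschitz condition on $\sigma$, this yields $\widetilde u_t=\Phi_t(u_s,F^{(s)})$, where $\Phi_t(\phi,G)$ denotes the solution map associated with initial condition $\phi$ and driving noise $G$.

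Next, I would extend the flow identity to an arbitrary stopping time $\tau$ by the standard recipe: approximate $\tau$ from above by a decreasing sequence $\tau_n$ of stopping times, each taking only countably many dyadic values, apply the deterministic flow identity conditionally on each atom $\{\tau_n=s\}$, and pass to the limit $n\to\infty$. The required continuity is delivered by Proposition \ref{cty}, which gives uniform-in-compacts control on $(t,x)\mapsto u_t(x)$, together with stability of $\Phi_t(\cdot,F)$ with respect to its initial datum. Once $u_{\tau+t}=\Phi_t(u_\tau,F^{(\tau)})$ is established for all $t\geq 0$ almost surely, the strong Markov property follows at once: for any bounded measurable functional $H$,
\[
\E\bigl[H\bigl((u_{\tau+t})_{t\geq 0}\bigr)\,\big|\,\cF_\tau\bigr]=\Psi(u_\tau),\qquad \Psi(\phi):=\E\bigl[H\bigl((\Phi_t(\phi,F))_{t\geq 0}\bigr)\bigr],
\]
by independence of $F^{(\tau)}$ from $\cF_\tau$ and the fact that $F^{(\tau)}$ has the same law as $F$.

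The main obstacle I anticipate is the stopping-time extension: one must verify that the solution map is jointly measurable in its initial datum and driving noise, and is stable enough under the approximation $\tau_n\downarrow\tau$ for the restart identity to pass to the limit in an appropriate function space. Proposition \ref{cty} and the moment bounds of Theorem \ref{momentbounds} provide exactly the quantitative continuity needed to make this limiting argument work rigorously, so the obstacle is one of careful bookkeeping rather than of new analytic ideas.
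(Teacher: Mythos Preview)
Your proposal is correct and outlines exactly the standard argument behind the result: the restart identity at deterministic times via the semigroup property and whiteness of the noise, followed by a dyadic approximation of stopping times using the path continuity in Proposition \ref{cty}. The paper itself does not give a proof but simply refers to \cite[Lemma 3.3]{Mueller}, whose argument is precisely the one you have reconstructed; so your approach coincides with the paper's (implicit) one.
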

\begin{proof}
We omit the proof since it is very similar to \cite[Lemma 3.3]{Mueller}.
\end{proof}

We end this section by recalling the following fractional Gronwall's inequality. 
\begin{proposition} \label{renew}\textnormal{\cite[Lemma 7.1.1]{Henry}}
Let $\rho>0$ and  suppose that $f(t)$ is a locally integrable function  satisfying
\begin{equation*} 
f(t)\leq c_1+k \int_0^t (t-s)^{\rho-1} f(s) d s\quad \text{for\,all} \quad  t >0,
\end{equation*}
for some positive constants $c_1,k$. Then there exist positive constants $c_2,c_3$ such that
\begin{equation*}
f(t)\leq c_2 e^{c_3  k^{1/\rho} t}\quad \text{for\,all}\quad t>0.
\end{equation*}
\end{proposition}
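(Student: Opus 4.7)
The plan is a Picard-style iteration: substitute the integral inequality into itself repeatedly, track the arising iterated kernels using the Beta function, and recognise the resulting majorant as (a bound on) the Mittag-Leffler series, which grows like an ordinary exponential of order $k^{1/\rho}t$. This is the classical route taken in Henry's book; the three things one must be careful about are (i) the Beta integral identity for iterated kernels, (ii) the vanishing of the remainder term on compact intervals, and (iii) the exponential bound for the Mittag-Leffler function.

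First, I would substitute the hypothesis into itself once. Fubini on the resulting double integral, together with the Beta identity
\[
\int_r^t (t-s)^{\rho-1}(s-r)^{\rho-1}\,ds \;=\; B(\rho,\rho)(t-r)^{2\rho-1} \;=\; \frac{\Gamma(\rho)^2}{\Gamma(2\rho)}(t-r)^{2\rho-1},
\]
gives a bound of the same form but with kernel $(t-r)^{2\rho-1}$ and constant $k^2\Gamma(\rho)^2/\Gamma(2\rho)$. Iterating $n$ times, and using at each step the identity $\int_r^t(t-s)^{\rho-1}(s-r)^{j\rho-1}\,ds = \Gamma(\rho)\Gamma(j\rho)/\Gamma((j+1)\rho)\,(t-r)^{(j+1)\rho-1}$, I would arrive at
\[
f(t) \;\le\; c_1 \sum_{j=0}^{n-1} \frac{\bigl(k\Gamma(\rho)\bigr)^j t^{j\rho}}{\Gamma(j\rho+1)} \;+\; \frac{\bigl(k\Gamma(\rho)\bigr)^n}{\Gamma(n\rho)} \int_0^t (t-s)^{n\rho-1} f(s)\,ds.
\]

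Second, I would show that the remainder tends to $0$ as $n\to\infty$, uniformly on any interval $[0,T]$. Since $f$ is locally integrable and $(t-s)^{n\rho-1}\le T^{n\rho-1}$ for large $n$, the remainder is bounded by $(k\Gamma(\rho)T^\rho)^n/(n\rho\,\Gamma(n\rho))\cdot \|f\|_{L^1[0,T]}$, which vanishes by Stirling's formula because $\Gamma(n\rho)$ grows super-exponentially. Passing to the limit gives
\[
f(t) \;\le\; c_1 \sum_{j=0}^{\infty} \frac{\bigl(k\Gamma(\rho)t^\rho\bigr)^j}{\Gamma(j\rho+1)} \;=\; c_1\, E_\rho\!\bigl(k\Gamma(\rho)t^\rho\bigr),
\]
where $E_\rho$ is the Mittag-Leffler function.

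Finally, I would invoke the standard asymptotic estimate $E_\rho(z)\le c_2 \exp(z^{1/\rho})$ valid for $z\ge 0$ and $\rho>0$ (which itself follows from a direct series comparison: split the sum at $j$ large enough that $\Gamma(j\rho+1)$ dominates $(j!)^{\rho}$ up to a constant, then compare each term to the corresponding term of the exponential series of $z^{1/\rho}$). Setting $z=k\Gamma(\rho)t^\rho$ gives $z^{1/\rho}=\Gamma(\rho)^{1/\rho}k^{1/\rho}t$, so one obtains $f(t)\le c_2\exp(c_3 k^{1/\rho}t)$ with $c_3:=\Gamma(\rho)^{1/\rho}$, completing the proof. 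The one mildly delicate point is the Mittag-Leffler asymptotic, but for our purposes the crude series-vs-series comparison suffices and avoids any contour-integral representation.
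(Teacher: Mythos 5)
The paper offers no proof of this proposition --- it is quoted directly from Henry's book --- and your argument is precisely the standard proof given there: iterate the inequality using the Beta identity for the convolution kernels, kill the remainder term on compacts via Stirling, and bound the resulting Mittag--Leffler series by an exponential of $k^{1/\rho}t$. The only loose spot is your sketch of the final estimate $E_\rho(z)\le c_2 e^{z^{1/\rho}}$ by comparing $\Gamma(j\rho+1)$ with $(j!)^\rho$: for $\rho<1$ (the relevant case in this paper, where $\rho=1-\beta/\alpha$) one has $\Gamma(j\rho+1)\approx \rho^{j\rho}(j!)^\rho$ up to polynomial factors, so $\Gamma(j\rho+1)$ is \emph{smaller} than $(j!)^\rho$ and the comparison must absorb the factor $\rho^{-j\rho}$ into the argument $z$ (changing $c_3$ by a harmless constant); the bound itself is a classical fact about the Mittag--Leffler function, so the proof stands.
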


\section{An approximation result}

Theorems \textnormal{\ref{asym}} and \textnormal{\ref{trichotomy}} are almost sure limit theorems and rely on some Borel-Cantelli type arguments.  To be able to carry out the proof, we will need to find an appropriate independent sequence of random variables and it is apriori not clear how to find such a sequence.   We follow \cite{CJK} and \cite{CJKS2} where this issue was successfully resolved. 

Let $n\geq 1$ and consider the following approximation
$F^{(n)}$ of the measure $F$ appearing in (\ref{mild}).  Recall that the covariance of $\dot{F}$ is given by $f(x)=\frac{1}{|x|^\beta}$, $x \in \R^d$. This can be written as $f=h\ast\tilde h$, where $h(x)=\frac{1}{|x|^{\frac{d+\beta}{2}}}$ and $\tilde{h}(x):=h(-x)$. Define $h_n(x):=h(x)Q_n(x)$ and $f_n(x)=(h-h_n)\ast(\tilde{h}-\tilde{h}_n)$, where 
$$
Q_n(x)=\prod_{j=1}^d \left(1-\frac{\vert x_j \vert}{n}\right)_+.
$$
We take $\dot F^{(n)}$ to be the noise satisfying 
\begin{equation*}
\Cov(\dot{F}^{(n)}(t,\,x), \dot{F}^{(n)}(s,\,y))=\delta_0(t-s)g_n(x-y), 
\end{equation*}
where $g_n=h_n \ast \tilde{h}_n$. 

By an argument similar to that of the proof of \cite[Lemma 9.3]{CJKS2}, we have that for any $\gamma\in(0,\,\beta \wedge 1)$ there exists a 
positive constant $c$ such that for all $s>0$ and $n \geq 1$,
\begin{equation*}
(p_s\ast f_n)(0)\leq c \frac{1}{n^{\gamma}}\frac{1}{s^{(\beta-\gamma)/\alpha}}.
\end{equation*}
As a consequence, using the semigroup property, we obtain that
\begin{equation}\label{rty} \begin{split}
 &\int_0^t\int_{\R^d\times \R^d}p_{t-s}(x-y)p_{t-s}(x-z)
f_n(y-z)\, \d y \, \d z \, \d s \\ 
&=\int_0^t\int_{\R^d} p_{2(t-s)}(z)
f_n(z)\,  \d z \,  \d s =\int_0^t (p_{2r} \ast f_n)(0) \d r \\
&\leq c t^{1-\frac{\beta-\gamma}{\alpha}} \frac{1}{n^{\gamma}}.
\end{split}
\end{equation}

Next, consider the following integral equation,
\begin{equation} \label{utn}
U_t^{(n)}(x)=(p_t\ast u_0)(x)+\int_0^t\int_{B(x,\,(nt)^{1/\alpha})}p_{t-s}(x-y)\sigma(U_s^{(n)}(y))F^{(n)}(\d s\,\d y).
\end{equation}
The unique solution to this integral equation can be found via a standard fixed point argument. Fix $n\geq 1$. Set $U_t^{(n, 0)}:=u_0$ and for each $j\geq 1$,  the $j$th Picard iteration is given by
\begin{align*}
U_t^{(n, j)}(x)=\int_{\R^d}p_t(x-y)u_0(y)\,\d y+\int_0^t\int_{B(x,\,(nt)^{1/\alpha})}p_{t-s}(x-y)\sigma(U_s^{(n, j-1)}(y))F^{(n)}(\d s\,\d y).
\end{align*}
Moreover, one can show that under our current standing conditions, the unique solution satisfies for all $t>0$ and $k \geq 2$,
\begin{equation*}
\sup_{n \geq 1}\sup_{s\in [0,\,t]} \sup_{x \in \R^d}\E|U_s^{(n)}(x)|^k\leq c_2 e^{c_1 k^{(2\alpha-\beta)/(\alpha-\beta)}t},
\end{equation*}
for some positive constants $c_1,c_2(k)$. As a consequence, for all $t>0$, $k \geq 2$, and sufficiently large $n$,
\begin{equation} \label{usup}
\sup_{s\in [0,\,t]} \sup_{x \in \R^d}\E|U_s^{(n,n-1)}(x)|^k\leq c_2 e^{c_1 k^{(2\alpha-\beta)/(\alpha-\beta)}t},
\end{equation}
for some positive constants $c_1,c_2(k)$.  We also have the following result which gives us the independent quantities we need. 
\begin{lemma}\label{ind}
Let $t>0$ and $n\geq 1$. Suppose that $\{x_i \}_{i=1}^\infty\subset \R^d$ with $\vert x_i-x_j\vert\geq 2n^{1+1/\alpha}t^{1/\alpha}$ for all $i\not=j$. Then $\{U^{(n,n)}_t(x_i)\}_{i=1}^\infty$ are independent random variables.
\end{lemma}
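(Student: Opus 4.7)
The plan is to factor the colored noise $F^{(n)}$ through an underlying space-time white noise $W$ and then show, by induction on $j$, that the truncated Picard iterate $U^{(n,j)}_t(x)$ is measurable with respect to $W$ restricted to a bounded spatial neighbourhood of $x$ whose radius grows linearly in $j$ with speed $(nt)^{1/\alpha}$. Since disjoint spatial portions of $W$ generate independent $\sigma$-algebras, this yields the conclusion as soon as the separation between the $x_i$'s exceeds twice the radius of dependence obtained after $n$ iterations.

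For the first step, the factorization $g_n=h_n\ast\tilde h_n$ together with $\mathrm{supp}(h_n)\subset[-n,n]^d$ lets one realize $F^{(n)}$ as a spatial smoothing of a space-time white noise $W$: for a predictable integrand $\varphi$ supported in $[0,t]\times D$,
\[
\int_0^t\!\int_{\R^d}\varphi(s,y)\,F^{(n)}(\d s,\d y)=\int_0^t\!\int_{\R^d}(\varphi(s,\cdot)\ast\tilde h_n)(z)\,W(\d s,\d z),
\]
so that the left-hand side is measurable with respect to $W$ restricted to $[0,t]\times(D+[-n,n]^d)$.

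For the inductive step, assume $U^{(n,j-1)}_s(y)$ is measurable with respect to $W$ on $[0,t]\times B(y,R_{j-1})$ for every $s\leq t$ and $y\in\R^d$, and substitute into
\[
U^{(n,j)}_t(x)=(p_t\ast u_0)(x)+\int_0^t\!\int_{B(x,(nt)^{1/\alpha})}p_{t-s}(x-y)\,\sigma(U^{(n,j-1)}_s(y))\,F^{(n)}(\d s,\d y).
\]
For each $y\in B(x,(nt)^{1/\alpha})$ the integrand $\sigma(U^{(n,j-1)}_s(y))$ is $W$-measurable on $B(y,R_{j-1})\subset B(x,(nt)^{1/\alpha}+R_{j-1})$, while by the first step the stochastic integral against $F^{(n)}$ over $B(x,(nt)^{1/\alpha})$ depends only on $W$ over a thickening of that ball by the $O(n)$-radius support of $h_n$. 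Both contributions can be absorbed into a radius $R_j\leq R_{j-1}+(nt)^{1/\alpha}$, the additive $O(n)$-term from the smoothing being dominated by the main term for $n$ large enough. Iterating from the deterministic base $U^{(n,0)}=u_0$ gives $R_n$ of order $n\cdot(nt)^{1/\alpha}=n^{1+1/\alpha}t^{1/\alpha}$.

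With $R_n\leq n^{1+1/\alpha}t^{1/\alpha}$, the hypothesis $|x_i-x_j|\geq 2n^{1+1/\alpha}t^{1/\alpha}$ then forces the balls $B(x_i,R_n)$ to be pairwise disjoint, so the random variables $U^{(n,n)}_t(x_i)$ are measurable with respect to disjoint portions of $W$ and hence mutually independent. The main technical care in the proof lies in the bookkeeping of the radius $R_j$ across the $n$ Picard iterations and in absorbing the lower-order $O(n)$ contribution coming from the spatial smoothing into the leading $n^{1+1/\alpha}t^{1/\alpha}$ term; the pointwise application of $\sigma$ raises no measurability issue since $\sigma$ is deterministic.
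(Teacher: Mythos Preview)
Your proposal is correct and follows exactly the approach the paper has in mind: the paper omits the proof, citing \cite[Lemma 5.4]{CJKS2}, and the argument there is precisely the white-noise factorization of $F^{(n)}$ via $g_n=h_n\ast\tilde h_n$ with $\mathrm{supp}\,h_n\subset[-n,n]^d$, followed by an inductive tracking of the spatial ``radius of dependence'' through the Picard scheme. One minor bookkeeping point: the recursion actually gives $R_n=n(nt)^{1/\alpha}+O(n\sqrt d)$, so the separation $2n^{1+1/\alpha}t^{1/\alpha}$ stated in the lemma suffices only once $n$ is large enough relative to $t$ and $d$ (which is how the lemma is used in the paper); for a statement valid for every $n\geq 1$ one would strictly need to add the $O(n)$ slack to the separation hypothesis.
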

\begin{proof}
The proof is similar to that of \cite[Lemma 5.4]{CJKS2} and is omitted.
\end{proof}

We will also need the fact that the random variables defined above approximate the solution to \eqref{main-eq}. We provide a proof of this fact next.
\begin{lemma}\label{approx}
For all $T >0$ and $k \geq 2$, there exist positive constants $c_1$ and $c_2$ such that for large enough $n$, 
\begin{align*}
\sup_{t \in [0,T]} \sup_{x \in \R^d}\E|u_t(x)-U_t^{(n, n)}(x)|^k\leq c_2  \frac{1}{n^{\gamma k/2}}e^{c_1k^{(2\alpha-\beta)/(\alpha-\beta)}t}.
\end{align*}
\end{lemma}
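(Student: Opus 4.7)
The plan is to decompose $u_t(x) - U_t^{(n,n)}(x) = [u_t(x) - U_t^{(n)}(x)] + [U_t^{(n)}(x) - U_t^{(n,n)}(x)]$ and estimate the two pieces separately. For the first difference I would write
\[
u_t(x) - U_t^{(n)}(x) = I_1 + I_2 + I_3,
\]
where $I_1$ is the stochastic integral of $p_{t-s}(x-y)\sigma(u_s(y))$ against $F$ over $B(x,(nt)^{1/\alpha})^c$; $I_2$ is the same integrand over $B(x,(nt)^{1/\alpha})$ but against the difference noise $F-F^{(n)}$, whose spatial covariance is exactly $f_n$; and $I_3$ is the stochastic integral over $B(x,(nt)^{1/\alpha})$ of $p_{t-s}(x-y)[\sigma(u_s(y)) - \sigma(U_s^{(n)}(y))]$ against $F^{(n)}$.

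For $I_1$ and $I_2$ I would apply Walsh's BDG inequality and then Minkowski's integral inequality to pull the $L^{k/2}(\Omega)$-norm under the resulting space-time covariance integral. The deterministic integrals that remain are controlled by Lemma~\ref{heat-corr}(a1) (with $R = (nt)^{1/\alpha}$) for $I_1$, giving a contribution of order $n^{-k(2\alpha+\beta)/(2\alpha)}$, and by (\ref{rty}) for $I_2$, giving the dominant factor $n^{-\gamma k/2}$. The integrand $\sigma(u_s(y))\sigma(u_s(z))$ is handled via the Lipschitz condition and the $k$-th moment bound of Theorem~\ref{momentbounds}; here the identity $1 + \tfrac{\alpha}{\alpha-\beta} = \tfrac{2\alpha-\beta}{\alpha-\beta}$ is what converts $(\E|u_s|^k)^{2/k} \le Ce^{ck^{\alpha/(\alpha-\beta)} s}$ picked up inside Minkowski into $e^{ck^{(2\alpha-\beta)/(\alpha-\beta)} t/2}$ after raising back to the $k/2$-power.

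For $I_3$ I would use the Lipschitz property of $\sigma$, Walsh's BDG and Lemma~\ref{heat-corr}(a3) to reduce to a fractional Volterra inequality for
\[
\phi_k(t) := \sup_{x\in\R^d}\bigl(\E|u_t(x) - U_t^{(n)}(x)|^k\bigr)^{2/k},
\]
of the form $\phi_k(t) \le A_n(t) + Ck\int_0^t (t-s)^{-\beta/\alpha}\phi_k(s)\,ds$, where $A_n(t) \le C n^{-\gamma} e^{ck^{\alpha/(\alpha-\beta)} t}$ is inherited from the bounds on $I_1$ and $I_2$. Proposition~\ref{renew} with $\rho = 1-\beta/\alpha$ then produces $\phi_k(t) \le Cn^{-\gamma} e^{c' k^{\alpha/(\alpha-\beta)} t}$, and raising to the $k/2$-power gives the announced estimate for $\E|u_t(x) - U_t^{(n)}(x)|^k$.

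For the Picard remainder I would set $D_j(t) := \sup_x \E|U_t^{(n,j+1)}(x) - U_t^{(n,j)}(x)|^k$ and run the same BDG/Lipschitz/Lemma~\ref{heat-corr}(a3) scheme to obtain $D_j(t)^{2/k} \le Ck\int_0^t(t-s)^{-\beta/\alpha} D_{j-1}(s)^{2/k}\,ds$; iterating $n$ times and using the uniform bound (\ref{usup}) on the base case $D_0$ yields super-polynomial decay in $n$, which after a telescoping sum is absorbed into the $n^{-\gamma k/2}$ term. The main obstacle is the bookkeeping of the $k$-dependence: the BDG constant scales like $k^{k/2}$, its $(2/k)$-power contributes a factor of $k$ which, after the fractional Gronwall step with $\rho = 1-\beta/\alpha$, becomes $k^{\alpha/(\alpha-\beta)}$ in the exponent, and this has to combine with Theorem~\ref{momentbounds} to produce exactly the claimed exponent $k^{(2\alpha-\beta)/(\alpha-\beta)}$ in $t$.
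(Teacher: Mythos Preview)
Your proposal is correct and uses the same toolkit as the paper (BDG, Lipschitz bound, the heat-kernel/covariance estimates of Lemma~\ref{heat-corr}, display~(\ref{rty}), the fractional Gronwall inequality, and Picard iteration). The organization is slightly different: the paper introduces an auxiliary process $V_t^{(n)}$ solving the equation on the truncated ball but driven by the \emph{original} noise $F$, and then splits
\[
u_t - U_t^{(n,n)} \;=\; \bigl[u_t - V_t^{(n)}\bigr] \;+\; \bigl[V_t^{(n)} - U_t^{(n,n)}\bigr],
\]
so that the first bracket isolates the spatial truncation error (handled exactly as your $I_1$ via Lemma~\ref{heat-corr}(a1)) while the second bracket carries both the noise replacement $F\to F^{(n)}$ and the single Picard increment $U^{(n,n)}-U^{(n,n-1)}$, closed by Gronwall on $\mathcal{D}_t^n:=\sup_x\E|V_t^{(n)}-U_t^{(n,n)}|^k$. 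You instead pivot on $U_t^{(n)}$ itself, bundling the spatial truncation and the noise replacement into $u_t - U_t^{(n)}$ and leaving the full Picard tail $U_t^{(n)} - U_t^{(n,n)}$ for the telescoping argument. Your route is marginally more streamlined (one fewer auxiliary object), at the cost of a small point you should make explicit: your $I_3$ is driven by $F^{(n)}$, so to invoke the bound of Lemma~\ref{heat-corr}(a3) in the Gronwall step you need $g_n\le f$, which holds because $0\le h_n\le h$ pointwise and all convolutions are of nonnegative functions. With that remark added, both arguments are equivalent in strength and yield the same final estimate.
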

\begin{proof}
Consider the following integral equation,
\begin{equation*} 
V_t^{(n)}(x)=(p_t\ast u_0)(x)+\int_0^t\int_{B(x,\,(nt)^{1/\alpha})}p_{t-s}(x-y)\sigma(V_s^{(n)}(y))F(\d s\,\d y).
\end{equation*}
We first look at $V^{(n)}_t(x)-U_t^{(n, n)}(x)$ and its moments.
\begin{align*}
V^{(n)}_t(x)-U_t^{(n,n)}(x)&=\int_0^t\int_{B(x,\,(nt)^{1/\alpha})}p_{t-s}(x-y)\sigma(V_s^{(n)}(y))F(\d s\,\d y)\\
&-\int_0^t\int_{B(x,\,(nt)^{1/\alpha})}p_{t-s}(x-y)\sigma(U_s^{(n,n-1)}(y))F^{(n)}(\d s\,\d y).
\end{align*}
We rewrite the above as 
\begin{align*}
V^{(n)}_t(x)-U_t^{(n,n)}(x)&=\int_0^t\int_{B(x,\,(nt)^{1/\alpha})}p_{t-s}(x-y)[\sigma(V_s^{(n)}(y))-\sigma(U_s^{(n,n-1)}(y))]F(\d s\,\d y)\\
&-\int_0^t\int_{B(x,\,(nt)^{1/\alpha})}p_{t-s}(x-y)\sigma(U_s^{(n,n-1)}(y))[F^{(n)}(\d s\,\d y)-F(\d s\,\d y)]\\
&:=I_1+I_2.
\end{align*}
We start bounding $I_2$. 
Using Burkholder-Davis-Gundy inequality and Minkowski's inequalities together with (\ref{usup}), we get that
\begin{equation*}
\begin{split}
\E \vert I_2 \vert^k\leq c_2 e^{c_1 k^{(2\alpha-\beta)/\alpha-\beta}t}\left( \int_0^t\int_{\R^d\times \R^d}p_{t-s}(x-y)p_{t-s}(x-z)
f_n(y-z)\, \d y \, \d z \, \d s\right)^{k/2}.
\end{split}
\end{equation*} 
Appealing to (\ref{rty}), we conclude that
\begin{equation*}
\sup_{t \in [0,T]} \sup_{x \in \R^d}\E \vert I_2 \vert^k\leq c_2(T)\frac{e^{c_1 k^{(2\alpha-\beta)/\alpha-\beta}t}}{n^{\gamma k/2}}.
\end{equation*}

We next treat $I_1$. We look at $U^{(n,n)}_t(x)-U_t^{(n, n-1)}(x)$. Using Burkholder-Davis-Gundy and Minkowski's inequalities together with Lemma \ref{heat-corr}(c), we obtain
\begin{equation*}
\begin{split}
\E|U_t^{(n,n)}(x)-U_t^{(n,n-1)}(x)|^k\leq c(k) \sup_{s \in [0,t]}\sup_{x \in \R^d} \E|U_s^{(n,n-1)}(x)-U_s^{(n,n-2)}(x)|^k \left( \int_0^t  s^{-\beta/\alpha}\d s\right)^{k/2}.
\end{split}
\end{equation*} 
Iterating $n$ times this procedure and choosing $T \leq 1/2$, we get 
\begin{equation*} \begin{split}
\sup_{t \in [0,T]} \E|U_t^{(n,n)}(x)-U_t^{(n,n-1)}(x)|^k&\leq c(k) T^{nk/2} \leq c(k) \left(\frac{1}{2}\right)^{nk/2} \\
&\leq c(k) \frac{1}{n^{\gamma k/2}}.
\end{split}
\end{equation*}
Splitting the interval $[0,T]$ into subintervals of length $\frac12$, we deduce that for all $T>0$,
\begin{equation*} \begin{split}
\sup_{t \in [0,T]} \E|U_t^{(n,n)}(x)-U_t^{(n,n-1)}(x)|^k\leq c(k,T) \frac{1}{n^{\gamma k/2}}.
\end{split}
\end{equation*}
We next set
\begin{equation*}
\mathcal{D}^n_t:=\sup_{ x\in \R^d}\E|V^{(n)}_t(x)-U_t^{(n,n)}(x)|^k.
\end{equation*}
Using Burkholder-Davis-Gundy inequality and Minkowski's inequalities, together with Lemma \ref{heat-corr}(c), and adding and substracting the term $U_s^{(n,n)}(y)$, we obtain
\begin{equation*}
\E \vert I_1 \vert^k\leq c(k,T) \int_0^t\frac{\mathcal{D}^n_s+n^{-\gamma k/2}}{(t-s)^{\beta/\alpha}}\,\d s
\end{equation*}
Using Proposition \ref{renew},
\begin{equation*}
\E \vert I_1 \vert^k\leq c_2 \left(\int_0^t\frac{\mathcal{D}^n_s}{(t-s)^{\beta/\alpha}}\,\d s+
\frac{e^{c_1 k^{(2\alpha-\beta)/\alpha-\beta}t}}{n^{\gamma k/2}}\right).
\end{equation*}

Combining the bound for $I_2$ and $I_1$, we obtain 
\begin{align*}
\mathcal{D}^n_t\leq c_2 \left( \frac{e^{c_1 k^{(2\alpha-\beta)/\alpha-\beta}t}}{n^{\gamma k/2}}+\int_0^t\frac{\mathcal{D}^n_s}{(t-s)^{\beta/\alpha}}\,\d s\right).
\end{align*}
By an appropriate use of Proposition \ref{renew}, we conclude that
\begin{equation} \label{d1}
\mathcal{D}^n_t\leq c_2 \frac{e^{c_1k^{(2\alpha-\beta)/\alpha-\beta}t}}{n^{\gamma k/2}}.
\end{equation} 
We now look at $u_t(x)-V_t^{(n)}(x)$ to obtain 
\begin{align*}
u_t(x)-V_t^{(n)}(x)&=\int_0^t\int_{B(x,\,(nt)^{1/\alpha})}p_{t-s}(x-y)[\sigma(u_s(y)-V_s^{(n)}(y))]F(\d s\,\d y)\\
&+\int_0^t\int_{B(x,\,(nt)^{1/\alpha})^c}p_{t-s}(x-y)\sigma(u_s(y))F(\d s\,\d y),
\end{align*}
which gives us
\begin{align*}
\E|u_t(x)-V_s^{(n)}(x)|^k&\leq c\bigg( \E\left|\int_0^t\int_{B(x,\,(nt)^{1/\alpha})}p_{t-s}(x-y)[\sigma(u_s(y)-V_s^{(n)}(y))]F(\d s\,\d y)\right|^k\\
&+\E\left|\int_0^t\int_{B(x,\,(nt)^{1/\alpha})^c}p_{t-s}(x-y)\sigma(u_s(y))F(\d s\,\d y)\right|^k \bigg)\\
&:=I_1+I_2.
\end{align*}
We bound the second term first. Using the bound on the moments of the solution together with Lemma \ref{heat-corr}(a), we obtain
\begin{align*}
I_2&\leq c_2 e^{c_1 k^{(2\alpha-\beta)/(\alpha-\beta)}t}\left[ \int_0^t\int_{B(x,\,(nt)^{1/\alpha})^{2,c}}p_{t-s}(x-y)p_{t-s}(x-w)f(y-w)\,\d y\,\d w\, \d s\right]^{k/2}\\
&\leq c_2 \frac{1}{n^{(2+\beta/\alpha)k/2}}e^{c_1 k^{(2\alpha-\beta)/(\alpha-\beta)}t}.
\end{align*}
We now consider the first term.
\begin{align*}
I_1&\leq c \int_0^t\sup_{y\in \R^d}\E|u_s(y)-V_s^{(n)}(y)|^k\int_{\R^d\times\R^d}p_{t-s}(x-y)p_{t-s}(x-w)f(y-w)\,\d y\,\d w\, \d s\\
&\leq c \int_0^t \sup_{y\in \R^d}\E|u_s(y)-V_s^{(n)}(y)|^k\frac{1}{(t-s)^{\beta/\alpha}}\,\d s.
\end{align*}
Putting these two bounds together and using Proposition \ref{heat-corr}, we obtain 
\begin{equation} \label{d2}
\sup_{s \in [0,T]} \sup_{x \in \R^d}\E|u_s(x)-V_s^{(n)}(x)|^k\leq c_2 \frac{1}{n^{(2+\beta/\alpha)k/2}}e^{c_1 k^{(2\alpha-\beta)/(\alpha-\beta)}t}.
\end{equation}
Combining the estimates (\ref{d1}) and (\ref{d2}) and using the fact that $\gamma<2+\beta/\alpha$ we obtain the required result.
\end{proof}

\section{Proof of the spatial asymptotic results}

In this section we give the proof of Theorems  \textnormal{\ref{asym}} and \textnormal{\ref{trichotomy}}. We start with several preliminary results.

\subsection{Tail estimates I}

This subsection is devoted to the proof of two tail estimates which are a consequence of the sharp moment estimates in Theorem \ref{momentbounds}.
\begin{lemma} \label{uppertail}
There exists a constant $c_{A,\alpha,\beta}>0$ such that for all $\lambda>0$ and $t>0$,
\begin{align*}
\sup _{x \in \R^d} \P(u_t(x)>\lambda)\leq  \exp\left(-\frac{c_{A,\alpha,\beta} \nu^{\beta/\alpha}}{t^{(\alpha-\beta)/\alpha}}\left|\log \frac{\lambda}{A\bar{u}_0}\right|^{(2\alpha-\beta)/\alpha}\right),
\end{align*}
where $A$ and $\bar{u}_0$ are defined in Theorem \textnormal{\ref{momentbounds}}.
\end{lemma}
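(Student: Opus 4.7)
The strategy is the standard Markov-moment optimization: we will apply Markov's inequality at a well-chosen moment $k$ and then optimize in $k$, feeding in the upper moment bound from Theorem \ref{momentbounds}. Concretely, for any $k \geq 2$,
\begin{equation*}
\P(u_t(x)>\lambda) \;\leq\; \lambda^{-k}\,\E|u_t(x)|^k \;\leq\; \left(\frac{A\bar{u}_0}{\lambda}\right)^{\!k} \exp\!\left(A\,k^{(2\alpha-\beta)/(\alpha-\beta)}\,t\,\nu^{-\beta/(\alpha-\beta)}\right).
\end{equation*}
Taking logarithms and writing $L:=\log(\lambda/(A\bar{u}_0))$, $b:=A\,t\,\nu^{-\beta/(\alpha-\beta)}$, and $a:=(2\alpha-\beta)/(\alpha-\beta)$, the problem reduces to minimising $\Phi(k):=-kL+b k^a$ over $k\geq 2$.

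The first-order condition $\Phi'(k)=0$ gives the interior minimiser $k_\star=(L/(ab))^{1/(a-1)}$, and since $a-1=\alpha/(\alpha-\beta)$ one has $1/(a-1)=(\alpha-\beta)/\alpha$. A direct substitution yields
\begin{equation*}
\Phi(k_\star) \;=\; -\,\frac{a-1}{a}\,k_\star L \;=\; -\,\frac{\alpha}{2\alpha-\beta}\,\frac{L^{(2\alpha-\beta)/\alpha}}{(ab)^{(\alpha-\beta)/\alpha}}.
\end{equation*}
Plugging back $b$ separates the $t$ and $\nu$ dependence: $(ab)^{(\alpha-\beta)/\alpha}$ produces exactly $t^{(\alpha-\beta)/\alpha}\nu^{-\beta/\alpha}$ up to a constant that depends only on $A,\alpha,\beta$. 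This is the announced rate; collecting constants into a single $c_{A,\alpha,\beta}>0$ gives the stated bound.

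The only delicate point is the constraint $k\geq 2$ in Theorem \ref{momentbounds}, which restricts where the interior optimum $k_\star$ is admissible. When $L$ is large enough one checks $k_\star\geq 2$ and the argument above is legitimate. When $L$ is small (i.e.\ $\lambda$ is close to, or below, $A\bar{u}_0$), the right-hand side of the lemma is close to $1$, and the inequality holds trivially after possibly enlarging $c_{A,\alpha,\beta}$; one simply splits into the two regimes and verifies that using $k=2$ in the small-$L$ regime still yields a bound dominated by the stated expression. The supremum over $x\in\R^d$ then follows since every estimate used is uniform in $x$, the moment bound of Theorem \ref{momentbounds} being spatially uniform. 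I expect the main technical nuisance to be bookkeeping of the exponents $a$ and $1/(a-1)$ and making the regime split clean; the core of the proof is simply the Markov-plus-optimise calculation.
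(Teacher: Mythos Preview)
Your proposal is correct and follows essentially the same route as the paper: Markov's inequality combined with the upper moment bound of Theorem \ref{momentbounds}, followed by optimisation in $k$. You are in fact slightly more careful than the paper, which silently ignores the admissibility constraint $k\ge 2$ and the small-$\lambda$ regime; in the paper the lemma is only ever applied with $\lambda$ large, so this point is immaterial for the downstream arguments.
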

\begin{proof}
We start by using Chebyshev's inequality to obtain,
\begin{align*}
\P(u_t(x)>\lambda)&\leq \frac{1}{\lambda^k}\E|u_t(x)|^k\\
&\leq A^k\bar{u}_0^k\lambda^{-k}e^{Ak^{(2\alpha-\beta)/(\alpha-\beta)} \nu^{-\beta/(\alpha-\beta)}t}\\
&\leq \exp\left(Ak^{\frac{2\alpha-\beta}{\alpha-\beta}} \nu^{-\beta/(\alpha-\beta)}t-k\log \frac{\lambda}{A\bar{u}_0}\right).
\end{align*}
The function $F(k):=Ak^{\frac{2\alpha-\beta}{\alpha-\beta}} \nu^{-\beta/(\alpha-\beta)}t-k\log \frac{\lambda}{A\bar{u}_0}$ is optimised at the point 
\begin{equation*}
k^\ast=\left[\frac{ \nu^{\beta/(\alpha-\beta)}}{At}\left(\frac{\alpha-\beta}{2\alpha-\beta}\right)\left(\log \frac{\lambda}{A\bar{u}_0}\right) \right]^{(\alpha-\beta)/\alpha}.
\end{equation*}
Some computations then give
\begin{equation*}
\P(u_t(x)>\lambda)\leq  \exp\left(-\frac{c_{A,\alpha,\beta} \nu^{\beta/\alpha}}{t^{(\alpha-\beta)/\alpha}}\left|\log \frac{\lambda}{A\bar{u}_0}\right|^{(2\alpha-\beta)/\alpha}\right),
\end{equation*}
where $c_{A,\alpha,\beta}=\frac{\alpha}{2\alpha-\beta}\left[\frac{1}{A}\left(\frac{\alpha-\beta}{2\alpha-\beta}\right) \right]^{(\alpha-\beta)/\alpha}.$
\end{proof}

\begin{lemma} \label{lowertail}
Fix $t>0$. Set $\lambda:=\frac{\underline{u}_0}{2A} e^{tk^{\alpha/(\alpha-\beta)} \nu^{-\beta/(\alpha-\beta)}/A}$ for $k\geq k_0$, where $k_0$ is a large number. Then there exists a constant $\tilde{c}_{A,\alpha,\beta}>0$, 
\begin{align*}
\inf _{x \in \R^d} \P(u_t(x)>\lambda)\geq \frac14 \exp\left(-\frac{\tilde{c}_{A,\alpha,\beta} \nu^{\beta/\alpha}}{t^{(\alpha-\beta)/\alpha}} \left(\log \frac{2\lambda A}{\underline{u}_0}\right)^{(2\alpha-\beta)/\alpha}\left(1+\frac{1}{\log \frac{2\lambda A}{\underline{u}_0}} \right)\right).
\end{align*}
The quantities $A$ and $\underline{u}_0$ are defined in Theorem \textnormal{\ref{momentbounds}}.
\end{lemma}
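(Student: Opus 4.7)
The plan is to use a Paley--Zygmund / second moment argument on $X := u_t(x)^k$, then optimize the parameter $k$ by the prescribed relation between $\lambda$ and $k$. Starting from the Paley--Zygmund inequality
\[
\P\bigl(X > \theta \E X\bigr) \geq (1-\theta)^2 \,\frac{(\E X)^2}{\E X^2},
\]
I would choose $\theta = \lambda^k / \E u_t(x)^k$ so that the event becomes $\{u_t(x) > \lambda\}$. The first step is to check that, for the prescribed
\[
\lambda = \frac{\underline{u}_0}{2A}\exp\!\left(\frac{t k^{\alpha/(\alpha-\beta)}\nu^{-\beta/(\alpha-\beta)}}{A}\right),
\]
one has $\theta \leq 2^{-k}$. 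This follows from raising $\lambda$ to the $k$-th power, using $k\cdot k^{\alpha/(\alpha-\beta)} = k^{(2\alpha-\beta)/(\alpha-\beta)}$, and comparing with the lower moment bound of Theorem \ref{momentbounds}: indeed
\[
\lambda^k = \frac{1}{2^k}\cdot \frac{\underline{u}_0^k}{A^k}\exp\!\left(\frac{t k^{(2\alpha-\beta)/(\alpha-\beta)}}{A\nu^{\beta/(\alpha-\beta)}}\right) \leq \frac{1}{2^k}\E u_t(x)^k,
\]
so $(1-\theta)^2 \geq (1-2^{-k})^2 \geq 1/4$ for $k \geq k_0$ (choosing $k_0 \geq 1$).

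Next I would bound the ratio $(\E u_t(x)^k)^2/\E u_t(x)^{2k}$ from below using the lower bound at $k$ and the upper bound at $2k$ from Theorem \ref{momentbounds}. This produces two factors: an exponential piece
\[
\exp\!\left(-c' \,t k^{(2\alpha-\beta)/(\alpha-\beta)}\nu^{-\beta/(\alpha-\beta)}\right)
\]
where $c' = A\cdot 2^{(2\alpha-\beta)/(\alpha-\beta)} - 2/A > 0$ for $A$ large, and a polynomial-in-$u_0$ factor $(\underline{u}_0/(A^2\bar{u}_0))^{2k} = \exp(-2kC)$ with $C=\log(A^2\bar{u}_0/\underline{u}_0)$.

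The key algebraic step is then to invert the relation defining $\lambda$. Setting $L := \log(2A\lambda/\underline{u}_0)$, one reads
\[
k = \Bigl[AL\,\nu^{\beta/(\alpha-\beta)}/t\Bigr]^{(\alpha-\beta)/\alpha},
\qquad
t k^{(2\alpha-\beta)/(\alpha-\beta)}\nu^{-\beta/(\alpha-\beta)} = A^{(2\alpha-\beta)/\alpha}\frac{\nu^{\beta/\alpha}}{t^{(\alpha-\beta)/\alpha}} L^{(2\alpha-\beta)/\alpha},
\]
after collecting the powers of $t$ and $\nu$. Substituting into the exponential piece yields the principal term $L^{(2\alpha-\beta)/\alpha}\nu^{\beta/\alpha}/t^{(\alpha-\beta)/\alpha}$. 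The linear-in-$k$ term $2kC$ converts to $2CA^{(\alpha-\beta)/\alpha}L^{(\alpha-\beta)/\alpha}\nu^{\beta/\alpha}/t^{(\alpha-\beta)/\alpha}$; writing $L^{(\alpha-\beta)/\alpha} = L^{(2\alpha-\beta)/\alpha}\cdot L^{-1}$ produces exactly the additive correction $1/L$ in the statement. Taking $\tilde c_{A,\alpha,\beta}$ as the maximum of $c'A^{(2\alpha-\beta)/\alpha}$ and $2CA^{(\alpha-\beta)/\alpha}$ gives the desired bound.

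The main subtlety is the bookkeeping for the lower-order correction: one must be careful that the $(\underline{u}_0/(A^2\bar{u}_0))^{2k}$ factor yields precisely a $1/L$ term (not worse), which requires the specific scaling $\lambda \propto e^{c\,k^{\alpha/(\alpha-\beta)}}$ chosen in the statement rather than the more natural $\lambda \propto e^{c\,k^{(2\alpha-\beta)/(\alpha-\beta)}/k}$. The choice of $k_0$ sufficiently large is needed both to apply the lower moment bound of Theorem \ref{momentbounds} and to ensure $(1-2^{-k})^2 \geq 1/4$.
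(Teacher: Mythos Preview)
Your proposal is correct and follows essentially the same approach as the paper: Paley--Zygmund applied to a power of $u_t(x)$, combined with the two-sided moment bounds of Theorem~\ref{momentbounds}, followed by inverting the relation between $\lambda$ and $k$. The only cosmetic difference is that the paper applies Paley--Zygmund with the pair of moments $(2k,4k)$ (writing the threshold as $\tfrac12\|u_t(x)\|_{L^{2k}}$) whereas you use the pair $(k,2k)$ with a variable $\theta$; after relabelling these are the same computation, and your verification that $\theta\le 2^{-k}$ corresponds exactly to the paper's implicit check that $\lambda\le\tfrac12\|u_t(x)\|_{L^{2k}}$.
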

\begin{proof}
By Paley-Zygmund inequality, we have for all $k \geq 2$,
\begin{align*}
\P(u_t(x)\geq \frac{1}{2}\|u_t(x)\|_{L^{2k}(\Omega)})\geq \frac{(\E|u_t(x)|^{2k})^2}{4\E|u_t(x)|^{4k}}.
\end{align*}
Set $\lambda:=\frac{\underline{u}_0}{2A}e^{tk^{\alpha/(\alpha-\beta)} \nu^{-\beta/(\alpha-\beta)}/A}$. Taking into account the bounds on the moments, we obtain 
\begin{align} \label{aux1}
\P(u_t(x)\geq \lambda)\geq \frac{1}{4}\exp\left(-c_{A, \alpha,\beta}k^{(2\alpha-\beta)/(\alpha-\beta)} \nu^{-\beta/(\alpha-\beta)}t+k\log \left(\frac{\tilde{u}_0^4}{A^8}\right)\right),
\end{align}
where $c_{A, \alpha,\beta}:=2^{(2\alpha-\beta)/(\alpha-\beta)}[A2^{(2\alpha-\beta)/(\alpha-\beta)}-\frac{2}{A}]$ and $\tilde{u}_0=\frac{\underline{u}_0}{\bar{u}_0}.$  Finally, some computations we get the desired bound.
\end{proof}

\subsection{Insensitivity analysis}

The next theorem is crucial in the proof of the spatial asymptotic result when the initial condition is not bounded below. Intuitively, we study how the solution is sensible to changes to the initial data, and we conclude that when $R$ is large, the values of the solution in a given ball of radius $R$ are insensitive to the changes of the initial value outside the ball. Here $\alpha=2$, so that the operator now is the usual Laplacian instead of the fractional Laplacian.
\begin{theorem}\label{sensitivity}
Let $a\in \R^d$ and $R>1$. Let $u$ and $v$ be the solution to \eqref{mild} for $\alpha=2$ with respective initial conditions $u_0$ and $v_0$.  Suppose that on $B(a,\,2R)$, $u_0(x)=v_0(x)$ and $v_0(x)\geq u_0(x)$ or $v_0(x)\leq u_0(x)$ everywhere else.  Then, there exists a function $g(t)$ such that for all $t>0$,
\begin{equation*}
\sup_{x\in B(a,\,R)}\E|u_t(x)-v_t(x)|^2\leq g(t) \|u_0-v_0\|^2_{L^\infty(\R^d)}e^{-\frac{R^2}{t}}.
\end{equation*}
\end{theorem}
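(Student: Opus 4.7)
Set $w_t(x):=u_t(x)-v_t(x)$, $\phi_t(x):=\E|w_t(x)|^2$, and $G_t(x):=(p_t\ast(u_0-v_0))(x)$. The plan is to derive a linear Volterra inequality for $\phi_t(x)$ whose forcing term retains the spatial profile of $(u_0-v_0)^2$, iterate it, and then use the Gaussian tail of the heat kernel, which is available only because $\alpha=2$. Writing $w_t(x)=G_t(x)+M_t(x)$ with $M_t(x)$ a mean-zero stochastic integral, Walsh's isometry together with the Lipschitz bound on $\sigma$ and Cauchy--Schwarz on $\E[|w_s(y)||w_s(z)|]$ yields
\begin{equation*}
\phi_t(x)\le G_t(x)^2+L_\sigma^2\int_0^t\!\!\int_{\R^d\times\R^d}p_{t-s}(x-y)p_{t-s}(x-z)f(y-z)\sqrt{\phi_s(y)\phi_s(z)}\,\d y\,\d z\,\d s.
\end{equation*}
Applying $\sqrt{ab}\le(a+b)/2$ with the $(y,z)$-symmetry of the kernel, and the uniform scaling bound $\int_{\R^d}p_{t-s}(x-z)f(y-z)\,\d z\le c(t-s)^{-\beta/2}$ (for $\alpha=2$, via $B_{t-s}=\sqrt{2\nu(t-s)}\,Z$ and $\sup_u\E|u-Z|^{-\beta}<\infty$ when $\beta<d$), collapses the double spatial integral to the single-convolution Volterra inequality
\begin{equation}\label{eq:plan-vo}
\phi_t(x)\le G_t(x)^2+C\int_0^t(t-s)^{-\beta/2}(p_{t-s}\ast\phi_s)(x)\,\d s.
\end{equation}

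The observation that keeps the spatial profile alive under convolutions is Jensen's inequality $G_s(y)^2\le(p_s\ast(u_0-v_0)^2)(y)$, which together with the semigroup property gives $(p_{t-s}\ast G_s^2)(x)\le(p_t\ast(u_0-v_0)^2)(x)=:H(t,x)$ uniformly in $s\in[0,t]$. Iterating \eqref{eq:plan-vo} and invoking this bound at each level, the $k$-th iterate applied to $G_\cdot^2$ is bounded pointwise by $H(t,x)$ times the simplex integral
\begin{equation*}
I_k(t):=\int_{0<s_k<\cdots<s_1<t}\prod_{i=1}^k(s_{i-1}-s_i)^{-\beta/2}\,\d s_1\cdots\d s_k=\frac{\Gamma(1-\beta/2)^k}{\Gamma(1+k(1-\beta/2))}\,t^{k(1-\beta/2)}.
\end{equation*}
Summing in $k$ produces a Mittag--Leffler prefactor $g_0(t):=\sum_{k\ge 0}C^k I_k(t)$, finite for every $t>0$, and the tail $C^{N+1}I_{N+1}(t)\sup_{s\le t,\,y}\phi_s(y)$ vanishes as $N\to\infty$ thanks to the uniform moment bound from Theorem \ref{momentbounds}. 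Thus $\phi_t(x)\le g_0(t)H(t,x)$ for all $x\in\R^d$.

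The last step estimates $H(t,x)$ on $B(a,R)$. Since $u_0\equiv v_0$ on $B(a,2R)$, the function $(u_0-v_0)^2$ is supported in $B(a,2R)^c$, and any $y$ there satisfies $|x-y|\ge|y-a|-|x-a|>R$. The Gaussian heat-kernel tail $\int_{|z|>R}p_t(z)\,\d z\le Ce^{-R^2/(c\nu t)}$, valid precisely because $\alpha=2$, gives
\begin{equation*}
H(t,x)\le\|u_0-v_0\|_\infty^2\int_{|z|>R}p_t(z)\,\d z\le C\|u_0-v_0\|_\infty^2 e^{-R^2/(c\nu t)},
\end{equation*}
so setting $g(t):=Cg_0(t)$ and absorbing the $\nu$ into the constant in the exponent finishes the proof.

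The main obstacle is the iteration in the middle step: a naive bound replacing $\phi_s$ by its spatial supremum would trade the Gaussian decay for a purely time-dependent constant and destroy the conclusion. The crucial mechanism is the Jensen plus semigroup telescoping that reduces every iterate to the single profile $H(t,x)$, whose smallness comes from the vanishing of $(u_0-v_0)^2$ on $B(a,2R)$; the prior AM--GM and symmetry collapse of the double spatial integral to a single convolution in \eqref{eq:plan-vo} is what makes the recursion fit the standard fractional-Volterra framework that summed to $g_0(t)$. The sign hypothesis $v_0\le u_0$ or $v_0\ge u_0$ off $B(a,2R)$ does not seem needed for this $L^2$ argument, though it is natural in the applications of the result.
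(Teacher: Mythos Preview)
Your argument is correct and takes a genuinely different route from the paper's. The paper invokes the two-point function $D_t(x,\tilde{x})=\E[(u_t(x)-v_t(x))(u_t(\tilde{x})-v_t(\tilde{x}))]$ and cites the recursive machinery of \cite{ChenKim} to obtain the factorised bound $D_t(x,\tilde{x})\le F(t)E_t(x)E_t(\tilde{x})$, then sets $x=\tilde{x}$ and estimates $E_t(x)=(p_t\ast(u_0-v_0))(x)$ via the Gaussian tail. You instead track only the one-point function $\phi_t(x)$, collapse the double spatial integral to a single heat convolution via AM--GM and the scaling bound on $\int p_r(\cdot)f(y-\cdot)$, and then use the Jensen/semigroup telescoping $(p_{t-s}\ast G_s^2)(x)\le(p_t\ast(u_0-v_0)^2)(x)$ so that every iterate is dominated by the single profile $H(t,x)$; the resulting Mittag--Leffler sum plays the role of the paper's unspecified $F(t)$.

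What each buys: the paper's factorised bound is slightly sharper pointwise (it produces $E_t(x)^2$ rather than $H(t,x)\ge E_t(x)^2$), but this makes no difference once one takes the Gaussian tail on $B(a,R)$. Your approach is fully self-contained and, as you observe, does \emph{not} use the sign hypothesis $v_0\ge u_0$ (or $\le$) off $B(a,2R)$. In the paper's route that hypothesis is genuinely used: the step $D^\sigma_s\le c_1 D_s$ needs $|w_s(y)||w_s(\tilde{y})|=w_s(y)w_s(\tilde{y})$ almost surely, which follows from the weak comparison principle once $u_0-v_0$ has a sign. Two minor remarks: the finite constant $\sup_{s\le t,y}\phi_s(y)$ you feed into the vanishing remainder is the uniform second-moment bound stated in the paper's existence paragraph, not Theorem~\ref{momentbounds} (which carries extra hypotheses); and your exponent $e^{-R^2/(c\nu t)}$ differs from the stated $e^{-R^2/t}$ by a harmless constant in the exponent, exactly as in the paper's own proof.
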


\begin{proof}
The proof will relies on computations done in \cite{ChenKim}. More precisely, we make use of the proof of part(2) of Theorem 2.4 of that paper. We first introduce some notations. Set 
\begin{align*}
E_t(x)&:= (\cG u_0)_t(x)-(\cG v_0)_t(x)\\
D_t(x,\,\tilde{x})&:=\E[u_t(x)-v_t(x)][u_t(\tilde{x})-v_t(\tilde{x})]\\
D^\sigma_t(x,\,\tilde{x})&:=\E[\sigma(u_t(x))-\sigma(v_t(x))][\sigma(u_t(\tilde{x}))-\sigma(v_t(\tilde{x}))].
\end{align*}
From the mild solution and the fact $\sigma$ is globally Lipschitz, we have 
\begin{align*}
&D_t(x,\,\tilde{x})\\
&=E_t(x)E_t(\tilde{x})+\int_0^t\iint_{\R^d\times \R^d}p_{t-s}(x-y_1)p_{t-s}(x-\tilde{y}_1)f(y_1, \tilde{y}_1)D^\sigma_s(y_1,\,\tilde{y}_1)\,\d y_1\,\d \tilde{y}_1 \d s\\
&\leq E_t(x)E_t(\tilde{x})+c_1\int_0^t\iint_{\R^d\times \R^d}p_{t-s}(x-y_1)p_{t-s}(x-\tilde{y}_1)f(y_1, \tilde{y}_1)D_s(y_1,\,\tilde{y}_1)\,\d y_1\,\d \tilde{y}_1 \d s.
\end{align*}
The recursive argument used in the proof of Theorem 2.4 of \cite{ChenKim} together with inequality (2.19) of Lemma 2.7 of the same paper yield the following bound,
\begin{align*}
D_t(x,\,\tilde{x})&\leq F(t)E_t(x)E_t(\tilde{x}),
\end{align*}
where $F(t)$ is some function of $t$. Other than the fact that it is well defined for any $t>0$, we won't need any other information on this function. We now use Lemma 2.2 of \cite{CDKh}, to see that 
\begin{align*}
\sup_{x\in B(a,\,R)}E_t(x)\leq c_2 \|u_0-v_0\|_{L^\infty(\R^d)}e^{-R^2/(2t)}.
\end{align*}
We combine the above estimate with $x=\tilde{x}$ to see that there exists a function $g(t)$ such that 
\begin{equation*}
\sup_{x\in B(a,\,R)}\E|u_t(x)-v_t(x)|^2\leq g(t) \|u_0-v_0\|^2_{L^\infty(\R^d)}e^{-\frac{R^2}{t}}.
\end{equation*}
\end{proof}

\subsection{Tail Estimates II}

In this subsection we are going to prove tail estimates when the initial condition is not bounded below. The next result is an extension of \cite[Theorem 2.4]{CDKh}. Here, we are still in the case $\alpha=2$.
\begin{theorem}\label{tail:asymp}
Suppose that $u$ and $u_0$ are as  in Theorem \textnormal{\ref{trichotomy}}. Then, there exist positive constants $K_1,K_2$ such that for all $\lambda>0$,
\begin{align*}
-K_1\frac{\Lambda^{2-\beta/2}}{t^{1-\beta/2}}\leq \liminf_{|x|\rightarrow \infty}\frac{\log \P(u_t(x)>\lambda)}{\log |x|}\leq \limsup_{|x|\rightarrow \infty}\frac{\log \P(u_t(x)>\lambda)}{\log |x|}\leq -K_2\frac{\Lambda^{2-\beta/2}}{t^{1-\beta/2}},
\end{align*}
uniformly for all $t$ in every fixed compact subset of $(0, \infty)$.
\end{theorem}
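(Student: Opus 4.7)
The approach follows the method of \cite[Theorem 2.4]{CDKh}, adapted to the colored-noise setting. We combine the insensitivity result of Theorem \ref{sensitivity} with the sharp moment-based tails of Lemmas \ref{uppertail}--\ref{lowertail}. The key point is that, by insensitivity, $u_t(x)$ depends mainly on $u_0$ inside a ball $B(x,2R)$ around $x$, so we may replace $u_0$ outside by something more convenient and then optimize the radius $R=R(|x|)$.

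\textbf{Upper bound.} I truncate: set $\tilde u_0 := u_0\,\mathbf{1}_{B(x,2R)}$ and let $\tilde u$ solve \eqref{mild} with this initial datum. Since $\tilde u_0 = u_0$ on $B(x,2R)$ and $\tilde u_0 \leq u_0$ elsewhere, Theorem \ref{sensitivity} yields
\[
\mathbb{E}\bigl[(u_t(x)-\tilde u_t(x))^2\bigr]\leq g(t)\bar u_0^{\,2}e^{-R^2/t}.
\]
Radial monotonicity of $u_0$ gives $\sup\tilde u_0 = u_0(|x|-2R)$, which tends to $0$ as $|x|\to\infty$. Splitting
\[
\mathbb{P}(u_t(x)>\lambda) \leq \mathbb{P}\bigl(\tilde u_t(x)>\tfrac{\lambda}{2}\bigr) + \mathbb{P}\bigl(|u_t(x)-\tilde u_t(x)|>\tfrac{\lambda}{2}\bigr),
\]
I bound the first term with Lemma \ref{uppertail} applied to $\tilde u$ (using $\bar{\tilde u}_0 = u_0(|x|-2R)$) and the second by Chebyshev, obtaining $(4/\lambda^2)g(t)\bar u_0^{\,2}e^{-R^2/t}$.

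\textbf{Lower bound.} I inflate instead: take $\hat u_0 := u_0\,\mathbf{1}_{B(x,2R)} + \bar u_0\,\mathbf{1}_{B(x,2R)^c}$, so that $\hat u_0 = u_0$ on $B(x,2R)$, $\hat u_0 \geq u_0$ everywhere, and crucially $\underline{\hat u}_0 = u_0(|x|+2R) > 0$. Theorem \ref{sensitivity} again controls $\mathbb{E}[(u_t(x)-\hat u_t(x))^2]$ by $g(t)\bar u_0^{\,2}e^{-R^2/t}$. I now apply Lemma \ref{lowertail} to $\hat u$ by choosing the parameter $k=k(x)$ so that the distinguished value $\lambda^*$ in that lemma equals the prescribed threshold $\lambda$; this produces a lower bound of the shape $\tfrac14 \exp\!\bigl(-\tilde c\,\nu^{\beta/2}t^{-(2-\beta)/2}|\log u_0(|x|+2R)|^{(4-\beta)/2}(1+o(1))\bigr)$. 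Then
\[
\mathbb{P}(u_t(x)>\lambda) \geq \mathbb{P}(\hat u_t(x)>2\lambda) - \mathbb{P}(|u_t(x)-\hat u_t(x)|>\lambda),
\]
and I subtract a Chebyshev-controlled insensitivity error. Finally, the defining asymptotic $|\log u_0(|y|)|\sim \Lambda(\log|y|)^{2/(2-\beta)}$ (applied with $|y|=|x|\pm 2R\sim|x|$ once $R=o(|x|)$) converts both bounds into the claimed form, with the asserted powers of $\Lambda$, $t$ and $\nu$.

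\textbf{Main obstacle.} The crux is the choice of $R=R(|x|)$: large enough that the insensitivity error $e^{-R^2/t}$ is dominated by the main tails, which forces $R^2/t$ to exceed (up to constants) $|\log u_0(|x|)|^{(4-\beta)/2}$, yet small enough that $R=o(|x|)$ so that the asymptotic for $u_0(|x|\pm 2R)$ remains controlled by $\Lambda$. A choice such as $R\approx C(t,\Lambda,\nu)(\log|x|)^{(4-\beta)/(2(2-\beta))}$ satisfies both constraints. It must also be verified that the $k$ obtained by inverting $\lambda^*$ exceeds the threshold $k_0$ of Lemma \ref{lowertail}; this holds for $|x|$ sufficiently large since $\underline{\hat u}_0\to 0$ forces $k\to\infty$. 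Uniformity in $t$ over compact subsets of $(0,\infty)$ follows by tracking the explicit $t$-dependence of every constant throughout the argument.
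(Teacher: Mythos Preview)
Your proposal is correct and shares the paper's core strategy: couple the insensitivity estimate (Theorem~\ref{sensitivity}) with the tail bounds of Lemmas~\ref{uppertail}--\ref{lowertail}. The paper's implementation differs in that it first invokes the weak comparison principle (Theorem~\ref{weakc}) to pass from $u$ to an auxiliary solution $w$ with radial initial datum $w_0(y)=u_0(|y|\vee 3|a|)$ (for the lower bound) or $w_0(y)=u_0(|y|\wedge 2|a|)$ (for the upper bound), and then applies insensitivity between $w$ and a solution $u^a$ started from the \emph{globally constant} value $u_0(3|a|)$, respectively $u_0(2|a|)$. This buys two simplifications: the tail lemmas are applied to constant initial data, so $\underline{u}_0=\bar u_0$ there and the constants come out cleanly; and since $w_0$ agrees with the constant on all of $B(a,2|a|)$, one may take $R=|a|$ in Theorem~\ref{sensitivity}, making the insensitivity error $e^{-|a|^2/t}$ automatically negligible against the polynomial main term and eliminating the balancing of $R$ that you carry out. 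Your route avoids the extra weak-comparison step but pays for it with the optimization (your $\bar{\tilde u}_0=u_0(|x|-2R)$ only tends to zero when $R=o(|x|)$); both arguments reach the stated conclusion.
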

\begin{proof}
We prove the lower bound first. Fix $a \in \R^d$. Let $w_t$ be the solution to \eqref{main-eq} when the initial condition is given by the following
\begin{align*}
w_0(x):=u_0(|x|\vee(3|a|))\quad \text{for all}\quad x\in \R^d.
\end{align*}
Since $w_0\leq u_0$, the weak comparison principle Theorem \ref{weakc} tells us that for all $t>0$ and $x\in \R^d$,
\begin{align*}
w_t(x)\leq u_t(x). 
\end{align*}
This means that finding a lower bound on the tail distribution of $u_t(x)$ amounts to finding a lower bound for the corresponding distribution of $w_t(x)$. 

Now, let $u_t^a(x)$ be the solution to \eqref{main-eq} with initial condition $u_0(3 \vert a \vert)$. Fix $\lambda>0$.
Then, by Theorem \ref{sensitivity}, whenever $R=|a|>1$,
\begin{align*}
\sup_{x\in B(a,\,|a|)}\P(|u_t^a(x)-w_t(x)|>\lambda)&\leq\sup_{x\in B(a,\,|a|)}\frac{\E|u_t^a(x)-w_t(x)|^2}{\lambda^2}\\
&\leq g(t) \frac{1}{\lambda^2 }e^{-\vert a \vert^2/t},
\end{align*}
where $g(t)$ is independent of $a$.

Recall that $u_0(3\vert a\vert)$ is decreasing in $a$ and 
\begin{equation*}
\lim_{a\rightarrow \infty}u_0(3\vert a \vert)=0.
\end{equation*}
Let $k_0$ to be as in Lemma \ref{lowertail}, we can take $\vert a \vert$ large enough so that 
$$
k:=\left(\frac{A\nu^{\frac{\beta}{2-\beta}}}{t}\log \left(\frac{4A \lambda}{u_0(3\vert a \vert)}\right)\right)^{1-\beta/2} \geq k_0,
$$
and
\begin{equation*}
\left|\log \left(\frac{4\lambda A}{u_0(3\vert a \vert)}\right)\right| \geq \frac12.\end{equation*}
We now use Lemma \ref{lowertail} to obtain
\begin{align*}
\inf_{x\in B(a,\,|a|)}\P(u^a_t(x)\geq 2\lambda)\geq \frac14 \exp\left(-\frac{c_{A,\beta} \nu^{\beta/2}}{t^{1-\beta/2}} \left(\log \frac{4\lambda A}{\underline{u}_0}\right)^{2-\beta/2}\right).
\end{align*}
Upon taking $\vert a \vert$ larger if required so that 
\begin{equation*}
 \left|\log \frac{4\lambda A}{u_0(3\vert a \vert)}\right|\leq 2|\log u_0(3\vert a \vert)|,
\end{equation*}
we can use the above together with the definition of $\Lambda$ to write
\begin{equation*} \begin{split}
\inf_{x\in B(a,\,|a|)}\P(u_t(x)\geq \lambda)&\geq \inf_{x\in B(a,\,|a|)}\P(u^a_t(x)\geq 2\lambda)-\sup_{x\in B(a,\,|a|)}\P(|u_t^a(x)-w_t(x)|>\lambda)\\
&\geq \frac14  \exp\left(-\frac{\tilde{c}_{A,\beta} \nu^{\beta/2}\Lambda^{2-\beta/2}}{t^{1-\beta/2}}\log |a|\right) - g(t) \frac{1}{\lambda^2 }e^{-\vert a \vert^2/t}.
\end{split}
\end{equation*}
The above immediately gives the lower bound needed. We now turn our attention to the upper bound. The proof uses a similar strategy to the one of the upper bound. We look at $w_t$, the solution to \eqref{main-eq} but this time, the initial condition is defined by 
\begin{align*}
w_0(x):=u_0(|x|\wedge 2|a|),
\end{align*}
so that now we have $w_0(x)\geq u_0(x)$ which gives us $w_t(x)\geq u_t(x)$ by the weak comparison principle. Now consider $u^a_t$ a solution with constant initial condition given by 
\begin{align*}
z_0(x):=u_0(2|a|).
\end{align*}
We choose $a$ large enough such that
\begin{equation*}
\left|\log \frac{\lambda}{2Au_0(2|a|)}\right|\geq \frac{|\log u_0(2|a|)|}{2}.
\end{equation*}
Then, by Theorem \ref{sensitivity} and Lemma \ref{uppertail}, for $\vert a \vert$ large enough
\begin{equation} \label{tail2}\begin{split}
\sup_{x\in B(a,\,|a|)}\P(u_t(x)>\lambda)&\leq\sup_{x\in B(a,\,|a|)} \P(w_t(x)>\lambda)\\
 &\leq \sup_{x\in B(a,\,|a|)}\P(u_t^a(x)>\lambda/2)+\sup_{x\in B(a,\,|a|)}\P(|u_t^a(x)-w_t(x)|>\lambda/2)\nonumber\\
&\leq \exp\left(-\frac{c_{A,\beta}\Lambda^{2-\beta/2}}{
t^{1-\beta/2}}\log |a|\right)+g(t) \frac{1}{\lambda^2}e^{-\vert a \vert^2/t},
\end{split}
\end{equation}
which implies the desired upper bound and thus finishes the proof.
\end{proof}

\subsection{Proof of Theorem \ref{asym}}
\begin{proof}[Proof of Theorem \ref{asym}.]
Let $t>0$ and set
\begin{align*}
L:= \frac{\underline{u}_0}{6A}\exp\left[\delta_1t^{(\alpha-\beta)/(2\alpha-\beta)}\left|\log R\right|^{\alpha/(2\alpha-\beta)} \nu^{-\beta/(2\alpha-\beta)}\right],
\end{align*}
where $\delta_1$ be a positive constant. Then, we choose $$k=(A \delta_1)^{(\alpha-\beta)/\alpha}\left(\frac{\vert \log(R) \vert}{t}\right)^{(\alpha-\beta)/(2\alpha-\beta)}\nu^{\beta/(2\alpha-\beta)}$$
so that $L$ becomes 
\begin{align*}
L:= \frac{\underline{u}_0}{6A}e^{tk^{\alpha/(\alpha-\beta)} \nu^{-\beta/(\alpha-\beta)}/A}.
\end{align*}
We now apply inequality (\ref{aux1}) to obtain for sufficiently large $R$,
\begin{align*}
&\P(u_t(x)>3L)\\
&\geq \frac{1}{4}\exp\bigg(-c_{A, \alpha,\beta}(A\delta_1)^{(2\alpha-\beta)/\alpha} \vert \log R \vert\\
&\qquad \qquad \qquad -\log \left(A^8\right)(A\delta_1)^{(\alpha-\beta)/\alpha} \left(\vert \log R \vert/t\right)^{(\alpha-\beta)/(2\alpha-\beta)}\nu^{\beta/(2\alpha-\beta)}\bigg) \\
&\geq \frac{1}{4 R^{\delta_2}},  
\end{align*}
where $\delta_1$ is chosen such that $\delta_2<2$. 

Let $N>0$ and choose $x_1,x_2,\ldots, x_N \in \R^d$ such that $\vert x_i-x_j\vert \geq 2n^{1+1/\alpha}t^{1/\alpha}$ for $i\not=j$.  Lemma \ref{ind} then implies that the $U^{(n,n)}_t(x_i)$'s are independent for large enough $n$. We have 
\begin{align*}
\P(\max_{1\leq i\leq N}u_t(x_i)<L)&\leq \P(\max_{1\leq i\leq N}|U^{(n,n)}_t(x_i)|<2L)\\
&\qquad +\P(|u_t(x_i)-U_t^{(n,n)}(x_i)|>L\quad\text{for some}\quad1\leq i\leq N)\\
&:=I_1+I_2.
\end{align*}
We will look at the second term first. By Lemma \ref{approx}, for all $k \geq 2$ and large $n$,
\begin{align*}
I_2\leq \frac{N\E|u_t(x_i)-U_t^{(n,n)}(x_i)|^k}{L^k}\leq c_2  \frac{N}{n^{\gamma k/2}}e^{c_1k^{(2\alpha-\beta)/(\alpha-\beta)}t},
\end{align*}
where we have chosen $R$ large enough such that $L \geq 1$.

We now choose $n\geq N^{10/(3\gamma)}$ so that we have 
\begin{align*}
I_2\leq c_2\frac{1}{N^{2/3}}e^{c_1k^{(2\alpha-\beta)/(\alpha-\beta)}t}.
\end{align*}
Upon choosing $N$ to be an integer greater than $R^3$, we obtain 
\begin{align*}
I_2 \leq c(T,k) \frac{1}{R^2}.
\end{align*} 
To bound $I_1$, we have for large enough $R$,
\begin{align*}
\P(U^{(n,n)}_t(x_i)\geq 2L)&\geq \P(|u_t(x_i)|\geq 3L)-\P(|u_t(x_i)-U_t^{(n,n)}(x_i)|\geq L)\\
&\geq c\left(\frac{1}{R^{\delta_2}} -\frac{1}{R^2}\right)\geq  \frac{c}{R^2}.
\end{align*}
By independence, we have 
\begin{align*}
I_1 \leq  \left(1-\P(U^{(n,n)}_t(x_i)\geq 2L)\right)^N.
\end{align*}
Combining the above and bearing in mind that $N$ is larger than $R^3$, we obtain 
\begin{align*}
\P(\max_{1\leq i\leq N}u_t(x_i)<L)&\leq \left(1-\P(U^{(n,n)}_t(x_i)\geq 2L)\right)^N+\frac{c}{R^2}\\
&\leq\frac{c}{R^2},
\end{align*}
for $R$ large enough.
And hence by a standard monotonicity argument, we have
\begin{align*}
\P\left(\sup_{x\in B(0,\,R)}u_t(x)\leq \frac{\underline{u}_0}{6A}\exp\left[\delta_1t^{(\alpha-\beta)/(2\alpha-\beta)}\left(\log R\right)^{\frac{\alpha}{(2\alpha-\beta)}}\nu^{-\beta/(2\alpha-\beta)}\right]\right)\leq\frac{c}{R^2}.
\end{align*}
We now use Borel Cantelli lemma to obtain that almost surely, for $R\rightarrow \infty$, we have 
\begin{align*}
\sup_{x\in B(0,\,R)}u_t(x)\geq \frac{\underline{u}_0}{6A}\exp\left[\delta_1t^{(\alpha-\beta)/(2\alpha-\beta)}\left(\log R\right)^{\frac{\alpha}{2\alpha-\beta}}\right],
\end{align*}
which  concludes  the proof of the  lower bound.
We now prove the upper bound.  Set
\begin{align*}
U:= A\bar{u}_0\exp\left[\delta_3t^{(\alpha-\beta)/(2\alpha-\beta)}\left(\log R\right)^{\frac{\alpha}{2\alpha-\beta}}\nu^{-\beta/(2\alpha-\beta)}\right],
\end{align*}
for some positive constant $\delta_3$. For $x\in \bZ^d$, denote the cube of side length $1$ by $Q_x$. Let $R$ be a positive integer and decompose $[-R,\,R]^d$ into cubes of the form $Q_x$ so that  $[-R,\,R]^d=\cup_{x\in S}Q_x$ where $S$ is some set of finite cardinality. By Proposition \ref{cty}, for any  $x\in \R^d$ and $k \geq 2$, we have
\begin{align}\label{spatial-cty}
\E\left[\sup_{w,\,y\in Q_x}|u_t(w)-u_t(y)|^{k}\right]\leq c_2 \exp\left[c_1 k^{(2\alpha-\beta)/(\alpha-\beta)}t\right].
\end{align}
We can now write 
\begin{align*}
\P\left(\sup_{x\in [-R,\,R]^d}u_t(x)\geq 2 U\right)&\leq \P\left(\max_{x\in S}u_t(x)\geq U\right)+\P\left(\max_{x\in S}\sup_{y\in Q_x}|u_t(y)-u_t(x)|\geq U \right)\\
&:=I_1+I_2.
\end{align*}
To bound $I_1$, we use Lemma \ref{uppertail} to obtain
\begin{align*}
I_1\leq |S|\P(u_t(x)\geq U)\leq \frac{c}{R^{\delta_4}},
\end{align*}
where the constant $\delta_3$ is chosen so that $\delta_4>1$.
We now bound $I_2$ by making use of \eqref{spatial-cty},
\begin{align*}
I_2\leq |S|\P\left(\sup_{y\in Q_x}|u_t(y)-u_t(x)|\geq U \right)\leq \frac{c_2 |S|\exp(c_1 k^{(2\alpha-\beta)/(\alpha-\beta)}t)}{\exp(k\delta_3 t^{(\alpha-\beta)/(2\alpha-\beta)}(\log R)^{\alpha/(2\alpha-\beta)}\nu^{-\beta/(2\alpha-\beta)})}.
\end{align*}
We now set $k=\delta_5\left(\frac{\log R}{t}\right)^{(\alpha-\beta)/(2\alpha-\beta)}\nu^{\beta/(2\alpha-\beta)}$ to obtain 
\begin{align*}
I_2\leq\frac{c}{R^{\delta_6}},
\end{align*}
where we choose $\delta_3$ so that $\delta_6>1$.
 We can conclude that 
\begin{align*}
\sum_{R=1}^\infty\P\left(\sup_{x\in [-R,\,R]^d}u_t(x)>2U\right)<\infty.
\end{align*}
We can now use Borel-Cantelli and the fact that $B(0,\,R)\subset [-R,\,R]^d$ to finish the proof.
\end{proof}
\subsection{Proof of Theorem \ref{trichotomy}.}
\begin{proof}[Proof of Theorem \ref{trichotomy}.]
We split the proof into two parts. In the first part we assume that $\Lambda>0$, although $\Lambda=\infty$ is also possible as a particular case. Consider a sequence $\{x_n\}_{n \geq 1}\subset \R^d$ such that $\vert x_n\vert=n^{1/2}$ and all $x_n$ lie on a straight line through the origin. We next choose 
\begin{equation*}
\lambda\in (0,\,\Lambda),
\end{equation*}
and consider $$\quad t(j,\,n):=\frac{jT}{n}, \quad \text{ for } \quad j\in [\frac{n\tau}{T},\,n]\cap \Z.$$
We look at the following parameters $\tau$ and $T$ such that
\begin{equation*}
0<\tau<T:=\frac{K_2^{2/(2-\beta)} \lambda^{(4-\beta)/(2-\beta)}}{8^{2/(2-\beta)} },
\end{equation*}
where $K_2$ is the constant in the statement of Theorem \ref{tail:asymp}.
Then, by Theorem \ref{tail:asymp}, for all $\theta>0$, $t\in (\tau,\,T)$ and large enough $n$,
\begin{align*}
\P\left(\max_{j\in [\frac{n\tau}{T},\,n]}u_{t(j,\,n)}(x_n)>\theta\right)&\leq \sum_{j\in [\frac{n\tau}{T},\,n] \cap \Z}\P\left(u_{t(j,\,n)}(x_n)>\theta\right)\\
&\leq c \sum_{j\in [\frac{n\tau}{T},\,n] \cap \Z} \exp \left( -\frac{K_2 \lambda^{(4-\beta)/2} }{t(j,n)^{(2-\beta)/2}}\log \vert x_n \vert\right)\\
&\leq c n \exp \left( -\frac{K_2 \lambda^{(4-\beta)/2} }{2 T^{(2-\beta)/2}}\log n\right)
\\
&\leq \frac{c}{n^3}.
\end{align*}
An application of Borel-Cantelli lemma gives us
\begin{align*}
\lim_{n\rightarrow \infty} \max_{j\in [\frac{n\tau}{T},\,n]\cap \Z}u_{t(j,\,n)}(x_n)=0\quad \text{a.s}.
\end{align*}
We now use Proposition \ref{cty} to obtain for all $\theta>0$,
\begin{align*}
\P\Big\{\sup_{t\in (\tau,\,T)}&\min_{j\in [\frac{n\tau}{T},\,n]\cap \Z} |u_{t(j,\,n)}(x_n)-u_t(x_n)|>\theta\Big\}\\
&\leq \P\Big\{\sup_{t\in (\tau,\,T): |s-t|\leq T/n}|u_s(x_n)-u_t(x_n)|>\theta\Big\}\\
&\leq \frac{c_{T,k}}{n^{\tilde{\eta} k}}.
\end{align*}
By choosing $k$ large enough, we can apply Borel-Cantelli and use the above to see that 
\begin{align} \label{aux5}
\lim_{n\rightarrow \infty}\sup_{t\in (\tau,\,T)}u_t(x_n)=0\quad \text{a.s}.
\end{align}
We next use Proposition \ref{cty}, to get for all $\theta>0$,
\begin{align*}
\P\Big\{\sup_{t\in (\tau,\,T)}\sup_{x\in [x_n,\,x_{n+1}]} &|u_t(x_n)-u_t(x)|\geq \theta\Big\}\\
&\leq c n^{1/2} \P\Big\{\sup_{t\in (\tau,\,T)}\sup_{\vert x-y\vert \leq \frac{1}{n}} |u_t(x)-u_t(y)|\geq \theta\Big\}\\
&\leq c\frac{n^{1/2}}{n^{\eta k}}.
\end{align*}
We then take $k$ large enough, use Borel-Cantelli again and (\ref{aux5}) to conclude that
\begin{align*}
\lim_{\vert x\vert \rightarrow \infty}\sup_{t\in (\tau,\,T)}u_t(x)=0\quad \text{a.s}, 
\end{align*}
where in the above, $x$ tends to infinity along a fixed straight line. Since the line is arbitrary and $u$ is almost surely jointly continuous (Proposition \ref{cty}), it follows that
$$
\P\left( \sup_{x \in \R^d} u_t(x)<\infty, \text{ for all } t \in (\tau, T)\right)=1.
$$

We next assume that $\Lambda<\infty$. Fix $\theta>0$ and set
\begin{align*}
E_t(x):=\{\omega\in \Omega: u_t(x)\leq \theta \}\quad \text{for every}\quad t>0,  x \in \R^d.
\end{align*}
We will show that solution is almost surely unbounded for large enough times. 
Let
$$
\tau> (2K_1\Lambda^{2-\beta/2} )^{2/(2-\beta)}\quad \text{and} \quad T>\tau.
$$
According to Theorem \ref{tail:asymp}, for every $\lambda \in (\Lambda, (\tau^{1-\beta/2}/(2K_1) )^{2/(4-\beta)}]$, we can find a real number $n(\lambda, \theta)>1$ such that
\begin{align} \label{k1}
\P(E_t(x))\leq \left( 1-|x|^{-K_1 \lambda^{2-\beta/2}/t^{1-\beta/2}}\right)\leq  \left( 1-\frac{1}{|x|^{1/2}}\right),
\end{align}
 uniformly for all $\vert x \vert \geq n(\lambda, \theta)$ and $t\in (\tau,\,T)$.
 Consider the events
\begin{align*}
E^{(n)}_t(x):=\{\omega\in \Omega: U^{(n,\,n)}_t(x)\leq 2\theta \}\quad \text{for every}\quad x \in \R^d, n \geq 1.
\end{align*}
By Lemma \ref{approx}, we get
\begin{equation} \label{k2} \begin{split}
\sup_{t\in(\tau,\,T)}\P(E_t(x)\backslash E_t^{n}(x))&\leq \sup_{t\in(\tau,\,T)}\P\left(\left|u_t(x)-U^{(n,\,n)}_t(x) \right|\geq \theta\right)\\
&\leq \frac{c_{T,k}}{n^{\gamma k/2}}.
\end{split}
\end{equation}
Therefore,
\begin{align*}
\P\left(\bigcap_{x \in [n^{4},\,2n^{4}]^d} E_t(x)\right)&\leq \P\left(\bigcap_{\ell\in [n^{4},\,2n^{4}]^d\cap \Z^d}E_t(\ell)\right)\\
&\leq\P\left(\bigcap_{\ell\in [n^{4},\,2n^{4}]^d\cap \Z^d}E^{(n)}_t(\ell)\right)+\frac{c}{n^{\gamma k/2}},
\end{align*}
uniformly for all $n \geq 1$ and $t \in (\tau, T)$.
We will now look at the first term of the above display. Set $x_1:=(n^4,\ldots, n^4)\in \R^d$ and define iteratively for $j\geq 1$, 
\begin{align*}
x_{j+1}:=x_j+(2n^{3/2}t^{1/2},\ldots,2n^{3/2}t^{1/2}).
\end{align*}
Let
\begin{align*}
\gamma_n:=\max\left\{j\geq 1: x_{j,i}\leq 2n^{4}, \text{ for all } i=1,\ldots, d\right\},
\end{align*}
where $x_j=(x_{j,1}, \ldots, x_{j,d})$.
Observe that
\begin{align*}
\gamma_n\geq \frac{n^{5/2}}{2T^{1/2}}.
\end{align*}
By independence (Lemma \ref{ind}), (\ref{k2}) and (\ref{k1}), we get
\begin{align*}
\P\left(\bigcap_{\ell\in [n^{4},\,2n^{4}]^d\cap \Z^d}E^{(n)}_t(\ell)\right)&\leq \P\left(\bigcap_{j=1}^{\gamma_n}E^{(n)}_t(x_j)\right)=\prod_{j=1}^{\gamma_n}\P\left(E^{(n)}_t(x_j)\right)\\
&\leq \prod_{j=1}^{\gamma_n}[\P\left(E_t(x_j)\right)+\frac{c}{n^{\gamma k/2}}]\\
&\leq  \left[1-\frac{1}{\sqrt{2n^{4}}}+\frac{c}{n^{\gamma k/2}}\right]^{\gamma_n}.
\end{align*}
We now take $k$ larger if necessary to obtain
\begin{align*}
\P\left(\bigcap_{\ell\in [n^{4},\,2n^{4}]^d\cap \Z^d}E^{(n)}_t(\ell)\right)&\leq \exp(-c_1n^{1/2}).
\end{align*}
Combining the above estimates, we have for large enough $k$,
\begin{align} \label{k3}
\sup_{t\in (\tau,\,T)}\P\left(\sup_{x \in [n^{4}, 2n^{4}]^d}u_t(x)\leq \theta\right)\leq \frac{c}{n^{\gamma k/2}}.
\end{align}
We next write
\begin{align*}
\P&\left(\inf_{t\in (\tau,\,T)}\sup_{x\in[n^{4},\,2n^{4}]^d}u_t(x)\leq \theta \right)\leq \P\left(\inf_{1\leq i\leq n}\sup_{x\in[n^{4},\,2n^{4}]^d}u_{t_i}(x)\leq 2\theta \right)\\
&\qquad \qquad \qquad \qquad +\P\left(\inf_{|t-s|<1/n}\sup_{x\in[n^{4},\,2n^{4}]^d}\left|u_s(x)-u_t(x)\right|\geq \theta \right):=I_1+I_2.
\end{align*}
From (\ref{k3}), we can bound the first term as follows
\begin{align*}
I_1\leq \frac{c n}{n^{\gamma k/2}}.
\end{align*}
We now look at the second term. Using Proposition \ref{cty} we obtain that
\begin{align*}
I_2 \leq \sum_{k=1}^{1+n^4}\P\left(\inf_{|t-s|<1/n}\sup_{x\in (k,k+1)^d}\left|u_s(x)-u_t(x)\right|\geq \theta \right)\leq \frac{c}{n^\kappa},
\end{align*}
where $\kappa$ can be made as large as possible. Combining the above estimates, we conclude that 
\begin{align*}
\P\left(\inf_{t\in (\tau,\,T)}\sup_{x\in \R^d} u_t(x)<\theta \right)=0.
\end{align*}
For each $N\geq 1$, set
\begin{align*}
T_N:=\inf\{t>0: \sup_{x\in \R^d} u_t(x)\geq N \}. 
\end{align*}
And let $T:=\lim_{N\rightarrow \infty} T_N$. By the above computations, we have $t_1<T<t_2$, where $t_1$ and $t_2$ are deterministic constant depending on $\Lambda$. For any $t<T$, we have $\sup_{x\in \R^d} u_t(x)<\infty$ otherwise this would contradict the definition of $T$. On the other hand, if we have $t>T$, we then have $\sup_{x\in \R^d} u_t(x)=\infty$.
\end{proof}

\section{Proof of the comparison principle and strict positivity}

In order to prove the strong comparison principle (Theorem \ref{strongcomparison}), we need the next two preliminary results which are extensions of 
\cite[Lemmas 7.1 and 7.2]{ChenHuang} (see also \cite[Lemmas 4.1 and 4.3]{CK}). In particular, the proof of the next proposition is new compared with that of 
\cite[Lemma 7.1]{ChenHuang} or \cite[Lemma 4.1]{CK}.
\begin{proposition}\label{initial}
Let $M>0$. For all $R>0$ and $t>0$, there exist constants
$0<c_R<1$ and $1<m_0(t,R)<\infty$ such that  for all $m \geq m_0$,
\begin{align*}
\int_{B(0,\,R)}p_{s}(x-y)\,\d y\geq c_R \quad \text{for all }\quad (s,x) \in A_{m,t,R},
\end{align*} 
where
$$
A_{m,t,R}:=\{(s,x): x\in B(0,\,R+M(t/m)^{1/\alpha}) \quad\text{and} \quad  \frac{t}{2m}\leq s\leq \frac{t}{m}\}.
$$
\end{proposition}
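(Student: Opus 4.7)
The plan is to apply the scaling identity $p_s(x) = a^d p_{a^\alpha s}(ax)$ with $a = (m/t)^{1/\alpha}$ to reduce the estimate to a fixed-time-scale statement. Substituting $u = ay$ in the integral yields
\begin{equation*}
\int_{B(0,R)} p_s(x-y)\,\d y = \int_{B(0,R')} p_{s'}(x'-u)\,\d u,
\end{equation*}
with $s' = (m/t)s$, $x' = ax$ and $R' = aR = (m/t)^{1/\alpha}R$. The hypothesis $(s,x) \in A_{m,t,R}$ translates to $s' \in [1/2,1]$ and $|x'| \leq R' + M$, while $R' \to \infty$ as $m \to \infty$.

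After this rescaling the task is to produce a positive lower bound, uniform in $s' \in [1/2,1]$ and in $x'$ with $|x'| \leq R' + M$, on $\int_{B(0,R')} p_{s'}(x'-u)\,\d u$ for all sufficiently large $R'$. Since $p_{s'}(w)$ is jointly continuous and strictly positive on the compact set $[1/2,1] \times \overline{B(0,2M)}$, the constant
\begin{equation*}
c_1 := \min_{s' \in [1/2,1],\, |w| \leq 2M} p_{s'}(w)
\end{equation*}
is strictly positive. It therefore suffices, for each admissible $x'$, to exhibit a ball of fixed positive radius contained in $B(x',2M) \cap B(0,R')$; on any such ball one has $p_{s'}(x'-u) \geq c_1$.

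The construction splits into two geometric cases. If $|x'| \leq R' - 2M$, the ball $B(x',2M)$ already lies inside $B(0,R')$, giving the lower bound $c_1\vert B(0,2M)\vert$. The delicate boundary-layer case is $R' - 2M < |x'| \leq R' + M$, for which I take $z_0 := (R' - M/2)\,x'/|x'|$. By construction $|z_0| + M/2 = R'$, so $B(z_0,M/2) \subset B(0,R')$. On the other hand, a triangle-inequality computation gives $|z_0 - x'| = \big||x'| - R' + M/2\big| \leq 3M/2$ (since $|x'| - R' \in (-2M, M]$), so $B(z_0,M/2) \subset B(x',2M)$ as well, and the integral is bounded below by $c_1\vert B(0,M/2)\vert$. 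Setting $c_R := \min\{c_1\vert B(0,M/2)\vert,\,1/2\}$ and choosing $m_0(t,R) > \max\{1,\,(2M)^\alpha t/R^\alpha\}$ so that $R' \geq 2M$ completes the argument.

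The main obstacle is the boundary-layer construction: one must find a ball of fixed radius that fits simultaneously inside $B(0,R')$ and inside $B(x',2M)$ even when $x'$ protrudes slightly outside $B(0,R')$. The radial choice of $z_0$ resolves this by placing $B(z_0,M/2)$ flush against $\partial B(0,R')$ on the side facing $x'$, after which the inclusion $B(z_0,M/2) \subset B(x',2M)$ is purely a matter of the triangle inequality, uniformly in $x'$ throughout the layer. No probabilistic machinery beyond the scaling of $p_s$ and its positivity is needed.
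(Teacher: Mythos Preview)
Your proof is correct and follows essentially the same approach as the paper: both arguments exploit the scaling of $p_s$ to show that a ball of radius comparable to $(t/m)^{1/\alpha}$ centered near $x$ fits inside $B(0,R)$, on which the kernel is bounded below by a constant times $s^{-d/\alpha}$. The paper invokes the heat-kernel lower bound $p_s(w)\ge c\,s^{-d/\alpha}$ for $|w|\lesssim s^{1/\alpha}$ directly and leaves the intersection geometry implicit, whereas you carry out the rescaling and the boundary-layer case split explicitly; these are the same argument at different levels of detail.
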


\begin{proof}
We take $m$ large enough so that $(\frac{t}{m})^{1/\alpha} \leq R$. Then, using the the lower bound (\ref{ee2}), we obtain
\begin{align*}
\int_{B(0,\,R)}p_s(x-y)\,\d y
&\geq  \int_{B(0,\,R)\cap B(x,\,2M(t/m)^{1/\alpha})}p_s(x-y)\,\d y\\
&\geq c \left( \frac{t}{m}\right)^{d/\alpha} s^{-d/\alpha} \\
&\geq c 
\end{align*}
where the constant $c$ might depend on $R$ and $M$ but can be chosen to be strictly less than $1$.
\end{proof}
\begin{remark}\label{counterexample}
While the above result holds for all $\alpha\in (0,\,2)$, we were unable to use it to prove strict positivity for $\alpha<1$. Ideally, we would need Lemma 4.1 of \cite{CK} for $\alpha<1$ as well. Let $d=1$ and set $u_0(x)=1_{[0,\,1]}(x)$. We now choose $x=1+\frac{1}{m}$ and $\frac{t}{2m}\leq s\leq \frac{t}{m}$ so that we are exactly in the setting of Lemma 4.1 of \cite{CK}. We now use the usual bound on the heat kernel to find that 
\begin{align*}
\int_{\R}p_s(x-y)u_0(y)\,\d y&\leq \int_0^1\frac{s}{|x-y|^{1+\alpha}}\,\d y\\
&\leq c\frac{t}{m}\left(\frac{1}{|x|-1} \right)^\alpha\\
&\leq cm^{\alpha-1}.
\end{align*}
Since $\alpha<1$, the right hand of the above inequality goes to zero as $m\rightarrow \infty$. We have thus proved that for Lemma 4.1 of \cite{CK} cannot hold for $\alpha<1$ and therefore the strict positivity result cannot follow directly from the method of that paper. The  failure of this method can also be seen by the fact that as $m$ tends to infinity, $B^m_k$ defined in the proof of Theorem \ref{strongcomparison} tends to $B(0,\,R)$ when $\alpha<1$.
\end{remark}

\begin{proposition}\label{stoch}
Fix $R>0$, $t>0$ and $M>0$ and assume that
$$
u_0(x) \geq 1_{B(0,\,R)}(x).
$$
Then there exist positive constants  $c_1(R)$, $c_2(R)$, and $m_0(t,R)$ such that for all $m \geq m_0$,
\begin{align*}
\P(u_{s}(x)\geq  c_1 1_{B(0,\,R+M(t/m)^{1/\alpha})}(x)&\quad\text{for all}\quad \frac{t}{2m} \leq s\leq\frac{t}{m}\quad\text{and} \quad x\in \R^d )\\
&\geq 1-c_m,
\end{align*}
where
$$
c_m:=\exp\left(-c_2m^{(\alpha-\beta)/\alpha}[\log m]^{(2\beta-\alpha)/\alpha}\right).
$$ 
\end{proposition}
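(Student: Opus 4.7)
The plan is to decompose the mild solution into its deterministic and stochastic parts, bound each separately, and match the decay of the noise term to the fixed lower bound for the drift.

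More precisely, write
\begin{equation*}
u_s(x)=J_s(x)+N_s(x),\qquad J_s(x):=(p_s\ast u_0)(x),\qquad N_s(x):=\int_0^s\!\!\int_{\R^d}p_{s-r}(x-y)\sigma(u_r(y))\,F(\d r\,\d y).
\end{equation*}
The assumption $u_0\geq\1_{B(0,R)}$ combined with Proposition~\ref{initial} immediately yields $J_s(x)\geq c_R$ uniformly for $(s,x)\in A_{m,t,R}$, whenever $m\geq m_0$. Therefore the statement reduces, with $c_1:=c_R/2$, to showing
\begin{equation*}
\P\!\left(\sup_{(s,x)\in A_{m,t,R}}|N_s(x)|>\tfrac{c_R}{2}\right)\leq c_m.
\end{equation*}

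For a pointwise moment bound on $N_s(x)$ I would apply the Burkholder-Davis-Gundy inequality, Minkowski, and the Lipschitz hypothesis on $\sigma$, and then use Lemma~\ref{heat-corr}\eqref{a3} to control the resulting space-time integral by $c\,s^{(\alpha-\beta)/\alpha}$ and Theorem~\ref{momentbounds} to control $\sup_{r\leq s,y}\E|u_r(y)|^k$. For $s\leq t/m$ and $k\geq 2$ this produces
\begin{equation*}
\E|N_s(x)|^k\leq (Ck^{1/2})^k(t/m)^{k(\alpha-\beta)/(2\alpha)}\exp\!\left(Ak^{(2\alpha-\beta)/(\alpha-\beta)}\tfrac{t}{m}\nu^{-\beta/(\alpha-\beta)}\right).
\end{equation*}
The two competing factors are the small-time decay $(t/m)^{k(\alpha-\beta)/(2\alpha)}$ and the exponential moment-blow-up in $k$ coming from Theorem~\ref{momentbounds}.

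To promote this pointwise control to a supremum over $A_{m,t,R}$, I would invoke the joint Hölder regularity from Proposition~\ref{cty}, which transfers to $N$ since $J$ is smooth. Because $A_{m,t,R}$ has spatial diameter bounded by $R+M$ (for $m$ large) and time extent of order $t/m$, a standard Kolmogorov/chaining argument on a grid of mesh $\sim m^{-\kappa}$ replaces the pointwise bound by a supremum bound at the cost of a factor polynomial in $m$:
\begin{equation*}
\E\!\sup_{(s,x)\in A_{m,t,R}}|N_s(x)|^k\leq C(m)^k k^{k/2}(t/m)^{k(\alpha-\beta)/(2\alpha)}\exp\!\left(Ak^{(2\alpha-\beta)/(\alpha-\beta)}\tfrac{t}{m}\nu^{-\beta/(\alpha-\beta)}\right).
\end{equation*}

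Finally, Chebyshev's inequality converts this into a tail bound indexed by $k$, and the proof reduces to optimizing the bound in $k$; this is where the main difficulty lies. The negative contribution $k\tfrac{\alpha-\beta}{2\alpha}\log m$ coming from the prefactor competes with the entropy term $\tfrac{k}{2}\log k$ (from the BDG constant $k^{1/2}$) and the positive exponential contribution $Ak^{(2\alpha-\beta)/(\alpha-\beta)}(t/m)$ from the moment bound. A careful balance places the optimal $k$ near $m^{(\alpha-\beta)/\alpha}$, modulo a logarithmic correction of the form $(\log m)^{(\alpha-2\beta)/\alpha}$, and plugging back in yields the exponent $m^{(\alpha-\beta)/\alpha}(\log m)^{(2\beta-\alpha)/\alpha}$ appearing in $c_m$. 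The optimization here is a supremum-version cousin of those carried out in the proofs of Lemmas~\ref{uppertail}--\ref{lowertail}; the main obstacle is handling the exponential term (which prevents the naive choice of $k$ from working) together with the polynomial-in-$m$ factor $C(m)^k$ generated by the chaining step.
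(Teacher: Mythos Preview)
Your outline matches the paper's proof almost exactly: decompose $u_s=J_s+N_s$, invoke Proposition~\ref{initial} to get $J_s\geq 2c_1$ on $A_{m,t,R}$, reduce to a tail bound on $\sup_{A_{m,t,R}}|N_s|$, control the $k$th moment of that supremum, and optimise in $k$. The only procedural difference is in how the supremum moment is obtained. You propose a pointwise bound via Burkholder--Davis--Gundy and Lemma~\ref{heat-corr} followed by an explicit chaining over a mesh; the paper instead appeals directly to Proposition~\ref{cty}, using that $N_0\equiv 0$ together with the time-H\"older exponent $\tilde\eta=(\alpha-\beta)/(2\alpha)$ over the window $[0,t/m]$, which delivers the bound $\E\sup_{A_{m,t,R}}|N_s(x)|^k\leq c\,\rho^{\tilde\eta k}\exp(Ak^{(2\alpha-\beta)/(\alpha-\beta)}\rho)$ with $\rho=t/m$ in one stroke. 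Both routes land at the same expression to be optimised in $k$, and both leave the final optimisation as a computation (the paper refers to \cite{ChenHuang} and \cite{CK}).

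One correction: your worry about a factor $C(m)^k$ coming from chaining is misplaced. The spatial part of $A_{m,t,R}$ is contained in $B(0,R+M)$ for all large $m$, so the Kolmogorov/Garsia constant depends only on $R$ and $M$, not on $m$; no polynomial-in-$m$ penalty arises. This is exactly why the paper can quote Proposition~\ref{cty} with a constant depending only on the fixed compact set. Once that phantom obstacle is removed, your optimisation in $k$ proceeds just as the paper's does.
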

\begin{proof}From the mild formulation of the solution of the equation, we have 
\begin{align*}
u_{s}(x)=\int_{\R^d}p_{s}(x-y)u_0(y)\,\d y+\int_0^{s}\int_{\R^d}p_{s-l}(x-y)\sigma(u_l(y)) F(\d y\,\d l).
\end{align*}
By Proposition \ref{initial}, there exists a $0<c_1<1$ such that for large enough $m$,
\begin{align*}
\int_{\R^d}p_{s}(x-y)u_0(y)\,\d y\geq 2c_1 1_{B(0,\,R+M(t/m)^{1/\alpha})}(x)\quad \text{for all }\quad x\in \R^d \quad\text{and} \quad \frac{t}{2m} \leq s\leq \frac{t}{m}.
\end{align*}
By using the mild formulation and the above, we obtain
\begin{align*}
&\P(u_{s}(x)\leq c_1 1_{B(0,\,R+M(t/m)^{1/\alpha})}(x) \quad\text{for some}\quad \frac{t}{2m}\leq s\leq\frac{t}{m})\\
&\leq \P\Big(\int_0^{s}\int_{\R^d}p_{s-l}(x-y)\sigma(u_l(y)) F(\d y\,\d l)<- c_1  \quad\text{for some}\quad (s,x) \in A_{m,t,R}\Big)\\
&\leq \P\Big(\left|\int_0^{s}\int_{\R^d}p_{s-l}(x-y)\sigma(u_l(y)) F(\d y\,\d l)\right|>c_1\quad\text{for some}\quad  (s,x) \in A_{m,t,R}\Big).\end{align*}
The term in the above display can now be bounded by 
\begin{align*}
c_1^{-k}\E\sup_{(s,x) \in A_{m,t,R} }\left|\int_0^{s}\int_{\R^d}p_{s-l}(x-y)\sigma(u_l(y)) F(\d y\,\d l) \right|^k.
\end{align*}
The above in turn can be bounded using Proposition \ref{cty} to obtain 
\begin{align*}
&\E\sup_{(s,x) \in A_{m,t,R}}\left|\int_0^{s}\int_{\R^d}p_{s-l}(x-y)\sigma(u_l(y)) F(\d y\,\d l) \right|^k\\
&\leq c \rho^{\tilde{\eta}k}\exp(Ak^{(2\alpha-\beta)/(\alpha-\beta)}\rho),
\end{align*}
where $\rho:=t/m$ and $\tilde{\eta}:=\frac{\alpha-\beta}{2\alpha}$. We now optimise the above quantity with respect to $k$ and combine all our estimates to end up with the result. See \cite{ChenHuang} and \cite{CK} for details.

\end{proof}

\subsection{Proof of Theorem \ref{strongcomparison}}
We next prove the strong comparison principle. We leave it to the reader ro consult \cite{Mueller} for the original idea and to \cite{ChenHuang} and \cite{CK} for further details.

\begin{proof} 
It suffices to show that if $u_0$ has compact support then $u_t(x)>0$ for all $t>0$ and $x \in \R^d$ a.s. The general case will follow as in \cite{ChenHuang} and \cite{CK}. 
Assume that $u_0(x)=1_{B(0,\, R)}(x)$, for some $R>0$. Choose $M>0$, $t>0$ and $m>0$.  Define for $k=1,\ldots,2m-1$,
\begin{align*}
A_k:=\left\{u_{s}(x)\geq c_1^{k+1}1_{B^m_k}(x)\quad\text{for all} \quad s\in \Big[\frac{kt}{2m},\,\frac{(k+1)t}{2m}\Big] \quad\text{and}\quad x\in \R^d\right\},\end{align*}
where $B^m_k=B(0, \, R+kM(t/m)^{1/\alpha})$ and $c_1$ is as in Proposition \ref{stoch}. 
It is clear that if $\alpha>1$, then as $m$ gets large, the sets $B^m_k$ cover the whole space. For $\alpha=1$, the sets $B^m_k$ cover $B(0, \, R+ Mt^{1/\alpha})$.  
We write
\begin{equation} \begin{split}\label{h1}
&\P(u_{s}(x)>0\quad\text{for all}\quad t/2\leq  s\leq t\quad\text{and}\quad x\in B(0,\, M/2))\\
&\qquad \qquad \geq \lim_{m\rightarrow \infty}\P(\cap_{1\leq k\leq 2m-1}A_k)\\
&\qquad\qquad=\lim_{m\rightarrow \infty}\P(A_1) \prod_{2\leq k\leq 2m-1}\P(A_k|A_{k-1}\cap\cdots \cap A_1).
\end{split}
\end{equation}
Proposition \ref{stoch} can be used to obtain 
\begin{equation}\label{h2}
\P(A_1)\geq 1-c_m,
\end{equation}
whenever $m$ is large enough since $c_1>c_1^2$. 
On the other hand, on the event $A_{k-1}$, $k \geq 2$,
$$
u_{\frac{kt}{2m}}(x)\geq c_1^{k}1_{B^m_{k-1}}(x), \quad \text{ for all } \; x\in \R^d.
$$
By the Markov property, $\{u_{s+\frac{kt}{2m} }(x), s \geq 0, x \in \R^d\}$ solves (\ref{main-eq}) with the time-shifted noise $\dot{F}_{k}(s,\,x):=\dot{F}(s+\frac{kt}{2m},\,x)$ starting from $u_{\frac{kt}{2m}}(x)$.
Let $\{v^{(k)}_{s}(x), s \geq 0, x \in \R^d\}$ be the solution to (\ref{main-eq}) with the time-shifted noise $\dot{F}_{k}(s,\,x)$, $\sigma$ replaced by $\sigma_k(x)=c_1^{-k} \sigma(c_1^kx)$,
and initial condition $1_{B^m_{k-1}}(x)$.   On one hand, by Proposition \ref{stoch} we get that
\begin{align*}
\P\big(v_{s}^{(k)}(x)\geq c_1 1_{B^m_k}(x)\quad\text{for all}\quad s\in \big[\frac{t}{2m},\frac{t}{m}\big]\quad \text{and}\quad x\in \R^d\big)\geq 1-c_m,
\end{align*}
whenever $m$ is large enough.
On the other hand, by Markov property and the weak comparison principle (Theorem \ref{weakc}) we see that on $A_{k-1}$, $u_{s+kt/(2m)}(x)\geq c_1^kv_{s}^{(k)}(x)$ for all $x\in \R^d$ and $s\geq 0$. We therefore have 
\begin{align*}
\P(A_k|\mathcal{F}_{kt/(2m)})\geq 1-c_m\quad \text{on}\quad A_{k-1}.
\end{align*}
And hence
\begin{equation} \label{h3}
\P(A_k|A_{k-1}\cap\cdots \cap A_1)\geq 1-c_m.
\end{equation}
From (\ref{h1}), (\ref{h2}) and (\ref{h3}), we conclude that
\begin{align*}
\P(u_{s}(x)>0\quad\text{for all}\quad t/2\leq  s\leq t\quad\text{and}\quad x\in B(0,\, M/2))\geq (1-c_m)^{2m-1} \rightarrow 1,
\end{align*}
as $m \rightarrow \infty$.
Since the above holds  any arbitrary $t>0$ and $R,M>0$, the proof is complete.
\end{proof}

\subsection{Proof of Theorem \ref{positivity}}
\begin{proof}
The proof is very similar to those in \cite{ChenHuang}, \cite{CK} and \cite{CJKCorr}, using the strong Markov property (Lemma \ref{markov}) and the weak comparison principle (Theorem \ref{weakc}).  So we omit it.
\end{proof}
{\bf \begin{remark}
As mentioned in the introduction, the above comparison theorem and strict positivity results are shown under the assumption that the initial conditions are bounded functions. A wider class of initial conditions could be studied as in \textnormal{\cite{ChenHuang}} and \textnormal{\cite{CK}}.  We leave it for further work.
\end{remark}}
\bibliography{Foon}

\def\cprime{$'$}
\begin{thebibliography}{10}

\bibitem{BEM2010a}
Lahcen Boulanba, M'hamed Eddahbi, and Mohamed Mellouk.
\newblock Fractional {SPDE}s driven by spatially correlated noise: existence of
  the solution and smoothness of its density.
\newblock {\em Osaka J. Math.}, 47(1):41--65, 2010.

\bibitem{Chen-Dalang}
Le~Chen and Robert Dalang.
\newblock Moments, intermittency and growth indices for the nonlinear
  fractional heat equations.
\newblock {\em Stoch. Partial Differ. Equ. Anal. Comput.}, 3(3):360--397, 2015.

\bibitem{ChenHuang}
Le~Chen and Jingyu Huang.
\newblock Comparison principle for stochastic heat equation on rd.
\newblock {\em The Annals of Probability, to appear}.

\bibitem{CDKh}
Le~Chen, Davar Khoshnevisan, and Kunwoo Kim.
\newblock A boundedness trichotomy for the stochastic heat equation.
\newblock {\em Ann. Inst. Henri Poincar{\'e} Probab. Stat}, 53:1991--2004,
  2017.

\bibitem{CK}
Le~Chen and Kunwoo Kim.
\newblock On comparison principle and strict positivity of solutions to the
  nonlinear stochastic fractional heat equations.
\newblock {\em Ann. Inst. Henri Poincar{\'e} Probab. Stat}, 53(1):358--388,
  2017.

\bibitem{ChenKim}
Le~Chen and Kunwoo Kim.
\newblock Nonlinear stochastic heat equation driven by spatially colored noise:
  moments and intermittenct.
\newblock {\em Acta Mathematica Scientia}, 38B(3):645--668, 2019.

\bibitem{XC}
Xia Chen.
\newblock Spatial asymptotics for the parabolic anderson models with
  generalized time-space gaussian noise.
\newblock {\em Ann. Probab.}, 44(2):1535--1598, 2016.

\bibitem{CJKCorr}
Daniel Conus, Mathew Joseph, and Davar Khoshnevisan.
\newblock Correlation-length bounds, and estimates for intermittent islands in
  parabolic spdes.
\newblock {\em Electronic Journal of Probability}, 17:1--15, 2012.

\bibitem{CJK}
Daniel Conus, Mathew Joseph, and Davar Khoshnevisan.
\newblock On the chaotic character of the stochastic heat equation, before the
  onset of intermitttency.
\newblock {\em Ann. Probab.}, 41(3B):2225--2260, 2013.

\bibitem{CJKS2}
Daniel Conus, Mathew Joseph, Davar Khoshnevisan, and Shang-Yuan Shiu.
\newblock On the chaotic character of the stochastic heat equation, ii.
\newblock {\em Probab. Theory Related Fields}, 156(3-4):483--533, 2013.

\bibitem{minicourse}
Robert Dalang, Davar Khoshnevisan, Carl Mueller, David Nualart, and Yimin Xiao.
\newblock {\em A minicourse on stochastic partial differential equations},
  volume 1962 of {\em Lecture Notes in Mathematics}.
\newblock Springer-Verlag, Berlin, 2009.

\bibitem{FK}
Mohammud Foondun and Davar Khoshnevisan.
\newblock Intermittence and nonlinear parabolic stochastic partial differential
  equations.
\newblock {\em Electron. J. Probab.}, 14:no. 21, 548--568, 2009.

\bibitem{FLJ}
Mohammud Foondun, Shiu-Tang Li, and Mathew Joseph.
\newblock An approximation result for a class of stochastic heat equations with
  colored noise.
\newblock {\em The Annals of Applied Probability}, 28:2855--2895, 2018.

\bibitem{FLO}
Mohammud Foondun, Wei Liu, and McSylvester Omaba.
\newblock Moment bounds for a class of fractional stochastic heat equation.
\newblock {\em The Annals of Probability}, 45:2131--2153, 2017.

\bibitem{Henry}
Daniel Henry.
\newblock {\em Geometric theory of semilinear parabolic equations}, volume 840
  of {\em Lecture Notes in Mathematics}.
\newblock Springer-Verlag, Berlin, 1981.

\bibitem{Davar-CBMS}
Davar Khoshnevisan.
\newblock {\em Analysis of stochastic partial differential equations}, volume
  119 of {\em CBMS Regional Conference Series in Mathematics}.
\newblock Published for the Conference Board of the Mathematical Sciences,
  Washington, DC; by the American Mathematical Society, Providence, RI, 2014.

\bibitem{KK}
Kunwoo Kim.
\newblock On the large-scale structure of the tall peaks for stochastic heat
  equations with fractional laplacian.
\newblock {\em preprint}.

\bibitem{kolo}
Vassili Kolokoltsov.
\newblock Symmetric stable laws and stable-like jump-diffusions.
\newblock {\em Proc. London Math. Soc. (3)}, 80(3):725--768, 2000.

\bibitem{Mueller}
Carl Mueller.
\newblock On the support of solutions to the heat equation with noise.
\newblock {\em Stochastics Stochastics Rep.}, 37(4):225--245, 1991.

\bibitem{Walsh}
John~B. Walsh.
\newblock An {I}ntroduction to {S}tochastic {P}artial {D}ifferential
  {E}quations.
\newblock In {\em \'{E}cole d'\'et\'e de {P}robabilit\'es de {S}aint-{F}lour,
  {XIV}---1984}, volume 1180 of {\em Lecture Notes in Math.}, pages 265--439.
  Springer, Berlin, 1986.

\end{thebibliography}
\end{document}